\documentclass[11pt,reqno]{amsart}
\usepackage{amssymb}
\usepackage{amsmath}
\usepackage{amsfonts}
\usepackage{graphicx}
\usepackage{subcaption}
\usepackage{tikz}
\usetikzlibrary{calc}
\usepackage{pgfplots}
\usepgfplotslibrary{groupplots}
\pgfplotsset{compat=1.18}
\usepackage{mathtools}
\usepackage[width=150mm,height=220mm,
centering,
]{geometry}
\usepackage{cite}
\usepackage{algorithm,algorithmic}
\usepackage{booktabs}
\usepackage{array}
\allowdisplaybreaks

\begin{document}
	\newtheorem{theorem}{Theorem}[section]
	\newtheorem{lemma}[theorem]{Lemma}
	\newtheorem{definition}[theorem]{Definition}
	\newtheorem{example}[theorem]{Example}
	\newtheorem{corollary}[theorem]{Corollary}
	\newtheorem{remark}[theorem]{Remark}
	\newtheorem{proposition}[theorem]{Proposition}
	\numberwithin{equation}{section}
	\def\arg{\mathrm{arg}}
	\def\rmd{\mathrm d}

	\bibliographystyle{abbrv}
	
	\title
	{Algorithmic Implementation of Multi-Window STFT Phase Retrieval }

\author[T. Chen]{Ting Chen}
\address{School of Mathematical Sciences and LPMC,
Nankai University,
Tianjin,
China}
\email{t.chen@nankai.edu.cn}

\author[H. Lu]{Hanwen Lu}
\address{School of Mathematical Sciences and LPMC,
Nankai University,
Tianjin,
China}
\email{2013537@mail.nankai.edu.cn}

\author[W. Sun]{Wenchang Sun}
\address{School of Mathematical Sciences and LPMC,
Nankai University,
Tianjin,
China}
\email{sunwch@nankai.edu.cn}

\author[Y. Zhao]{Yutong Zhao}
\address{School of Mathematical Sciences and LPMC,
Nankai University,
Tianjin,
China}
\email{zhaoyt@mail.nankai.edu.cn}

\date{}

	\keywords{Phase retrieval; STFT; compactly supported windows; reconstruction algorithm; Paley-Wiener space.}
	
\thanks{Corresponding author: Wenchang Sun.}

\thanks{This work was supported by the National Natural Science Foundation of China (grant No. 12271267 and 12571104) and the Fundamental Research Funds for the Central Universities.}

	\begin{abstract}
STFT phase retrieval aims to reconstruct a signal from spectrogram magnitudes. While two compactly supported windows theoretically guarantee uniqueness, a complete algorithmic framework remains lacking. This paper systematically constructs numerical schemes for multi-window STFT phase retrieval. We theoretically provide a nonuniqueness counterexample for the single-window case. We then propose a two-window reconstruction algorithm based on Shannon interpolation and analytic square-root extraction. To overcome the instability of the global sign choice, we introduce a third window and a robust $K$-sign method that determines the sign locally. We also adapt SCR algorithm to the STFT setting. For the $K$-sign method, we establish deterministic stability on a finite contiguous block of frequency samples, with the contribution of the complementary tail explicitly included in the error bound. Numerical experiments validate the effectiveness of all schemes.
	\end{abstract}
	
	\maketitle
	
	\section{Introduction}
	The reconstruction of a signal from intensity-only measurements, known as phase retrieval, is a central inverse problem in science and engineering. It arises naturally in numerous physical contexts, including optics \cite{1963optics,1978object,1988optics,2015Optical}, X-ray crystallography \cite{1983cry,2014cry,2018cry}, astronomy \cite{1987ast,2014ast}, and quantum information theory \cite{2013Quantum,2015quantum}. When the measurement system is modeled by the short-time Fourier transform (STFT) \cite{2021stft,2022gabor,
2018stft,		2022stft,2025stft,2025stft1,
2019gabor,2021stft1,2024stlct,
2004entire,Jaganathan2016STFTPR,
Li2017PhaseRetrieval,
Alaifari2024MultiWindow,
Chen2026STFTACHA}, the task is to reconstruct a signal from its spectrogram  $|\mathcal V_\phi f(t,\omega)| $.
	
	Here $\mathcal V_\phi f$ denotes the short-time Fourier transform. Let $f\in L^2(\mathbb R)$ be a signal and $\phi\in L^2(\mathbb R)$ be a window function.
	The short-time Fourier transform (STFT) of $f$ with respect to $\phi$ is defined as
	\[
	\mathcal{V}_\phi f(t,\omega):=\int_{\mathbb R} f(x)\,\overline{\phi(x-t)}\,e^{-2\pi i\omega x}\,dx,\qquad (t,\omega)\in\mathbb R^2.
	\]
	The STFT provides a time-frequency representation of $f$. As is well known, under mild conditions on the window $\phi$, the STFT magnitude $|\mathcal{V}_\phi f(\Lambda)|$ over $\Lambda=\mathbb R^2$ determines every $f\in L^2(\mathbb R)$ uniquely up to a global phase~\cite{2021stft,2019gabor}.
	
	However, in practical applications, measurements are typically available only on a discrete set $\Lambda\subset\mathbb R^2$, leading to the discrete STFT phase retrieval problem: recover $f$ from the data
	\[
	\{|\mathcal{V}_\phi f(t,\omega)|:(t,\omega)\in\Lambda\}.
	\]
	It is well known that $f$ and $\lambda f$ are indistinguishable from the data, where $\lambda\in\mathbb T:=\{z\in\mathbb C:|z|=1\}$. Hence we only recover $f$ up to a global phase.
	
	The uniqueness of STFT phase retrieval has been studied extensively. For finite-dimensional spaces, Jaganathan, Eldar and Hassibi \cite{Jaganathan2016STFTPR} showed that almost every signal is uniquely determined up to a global phase with overlapping windows, yet single-window schemes always admit nontrivial ambiguities. To overcome this, Li, Cheng, Han et al. \cite{Li2017PhaseRetrieval} introduced a multi-window graph-theoretic framework, proving that connectivity of a certain support graph guarantees uniqueness, and provided an algebraic reconstruction algorithm with stability.
	
	For infinite-dimensional function spaces, recent work by Grohs and Liehr \cite{2022stft} further revealed a fundamental barrier: for any window and any lattice, there exist distinct functions sharing the same STFT samples. Then Grohs, Liehr and Rathmair \cite{2025stft1} showed that four windows suffice to uniquely determine every
	$L^2$-function from lattice samples. Alaifari and Yang \cite{Alaifari2024MultiWindow} constructed multi-window families whose ambiguity functions cover the time-frequency plane, enabling stable recovery via direct inversion.
We \cite{Chen2026STFTACHA} proved that two compactly supported windows ensure unique recovery for $L^p$-functions under discrete sampling, yet a complete algorithm was missing.
	
We observe that the key
to STFT phase retrieval with compactly supported windows
lies in the conjugate phase retrieval problem in the Paley-Wiener space. Within $\mathrm{PW}_B^p$ itself, however, a closely related obstruction appears. McDonald's earlier work \cite{2004entire} provided uniqueness and factorization results for entire functions, but not an explicit reconstruction algorithm. Subsequently, Lai, Littmann, and Weber \cite{2021Conjugate} studied the conjugate phase retrieval problem from the measurements
	\[
	\bigl\{|f(t_n)|,\ |f(t_n+b)-f(t_n)|:n\in\mathbb Z\bigr\}.
	\]
	They showed that the squared modulus can be stably reconstructed from these samples and that the measurement map is injective. However, they explicitly remarked that no obvious reconstruction algorithm was available from these two measurement chains alone. Their subsequent numerical scheme therefore resorted to additional structured convolution measurements, namely the structured convolution reconstruction (SCR) algorithm.
	
More recently, Cheng, Wu, and Xian \cite{Cheng2025Conjugate}
developed a graph-based framework for conjugate phase retrieval,
showing that any complex-valued function in the space $\mathrm{PW}_B^2$ can be recovered,
up to a global phase and conjugation,
from structured phaseless samples taken at three times the Nyquist rate, under appropriate conditions on the sampling set.
They also
introduced two numerical reconstruction algorithms for this purpose.

In this paper, we first
present a counterexample, which can be regarded as a supplement to the counterexample in Grohs and Liehr \cite{2022stft},
to show that for a certain window function
and the union of finitely many translates of any lattice in $\mathbb R^2$, including the case where the translation parameters are irrational multiples of the lattice spacing,
there exists an $L$-nonseparable  signal
for which the absolute values of the STFT samples at these points do not  uniquely determine the signal up to a phase.
Leveraging the uniqueness conditions established in \cite{Chen2026STFTACHA}, we design a two-window reconstruction algorithm based on the analyticity of Paley-Wiener functions and the Shannon sampling theorem. However, this algorithm is highly sensitive to noise, particularly in the analytic square-root step, which leads to instability. To mitigate this, we propose a more robust three-window reconstruction strategy.
Moreover, we give explicit error bounds for the stability of the reconstruction algorithm.
Furthermore, by incorporating the conjugate phase retrieval framework of Lai, Littmann, and Weber  \cite{2021Conjugate}, we construct another three-window scheme via the SCR algorithm.
	
	\section{Preliminaries and Single-Window Counterexample}
	This section establishes the necessary background and demonstrates a fundamental limitation of single-window STFT phase retrieval.
	
	\subsection{Basic results}
	In this subsection, we introduce
	some fundamental tools from sampling theory and phase retrieval that are used in our algorithms.

	For $1\le p\le\infty$ and bandwidth $B>0$, define the Paley-Wiener space
	\[
	\mathrm{PW}_B^p:=\left\{f\in L^p(\mathbb R): \operatorname{supp}\hat f\subset[-B,B]\right\},
	\]
	where $\hat f$ is the Fourier transform of $f$ in the distributional sense.
	By \cite[Theorem 4]{Entire2014}, $\mathrm{PW}_B^p$ consists of all entire functions of exponential type $2\pi B$ whose restriction on the real line is in $L^p(\mathbb R)$.
	
	For $1\le p<\infty$, the classical Shannon sampling theorem says that every $f\in \mathrm{PW}_B^p$ can be recovered from its samples at the Nyquist rate; see Proposition~\ref{prop2}. The endpoint case $p=\infty$ is treated separately below; see Proposition~\ref{prop2.3}. These formulas are applied in our reconstruction algorithms.
	\begin{proposition}[{\cite[Theorem 4.1]{pesenson2025notes}}]\label{prop2}
		Let $1\le p<\infty$ and $B>0$. Then for any $f \in \mathrm{PW}_B^p$, we have
		\begin{align}\label{equ1}
			f(x)=\sum_{n \in \mathbb{Z}} f\left(\frac{n}{2 B}\right) \operatorname{sinc}(2 B x-n), \quad \forall x \in \mathbb{R}
		\end{align}
		with absolutely and uniformly convergence on $\mathbb{R}$, where $\operatorname{sinc}(x)=\frac{\sin(\pi x)}{\pi x}$.
	\end{proposition}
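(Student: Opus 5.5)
The plan is to reduce the case $1\le p<\infty$ to the classical $L^2$ theory, using the inclusion $\mathrm{PW}_B^p\subset \mathrm{PW}_B^q$ for $q\ge p$ together with Plancherel–Pólya type estimates on the samples. The proof has three steps.

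\emph{Step 1: the $L^2$ case.} For $f\in \mathrm{PW}_B^2$, the transform $\hat f\in L^2[-B,B]$ extends $2B$-periodically and expands in a Fourier series:
\[
\hat f(\xi)=\frac{1}{2B}\sum_{n\in\mathbb Z} f\!\left(\tfrac{n}{2B}\right)e^{-\pi i n\xi/B}\qquad\text{on }[-B,B].
\]
The coefficients are identified by Fourier inversion, $f(x)=\int_{-B}^B\hat f(\xi)e^{2\pi i x\xi}\,d\xi$. Multiplying by $\chi_{[-B,B]}$ and inverting termwise gives the series (\ref{equ1}), with convergence in $L^2$. Since the inverse Fourier integral over a compact interval is bounded from $L^2[-B,B]$ to $L^\infty$, the convergence is also uniform.

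\emph{Step 2: the case $1\le p<\infty$, pointwise and uniform convergence.} For $f\in\mathrm{PW}_B^p$, the Plancherel–Pólya inequality (via subharmonicity of $|f|^p$ and exponential type $2\pi B$) gives
\[
\sum_n |f(n/2B)|^p\le C\|f\|_p^p,
\]
and also $f\in L^\infty$. Hence $|f|^{p}$ with $p\le 2$ forces $f\in L^2$, so $f\in \mathrm{PW}_B^2$ and Step 1 applies directly. For $p>2$, I would use oversampling: choose $\epsilon>0$ and apply Step 1 to $g(x)=f(x)\,\mathrm{sinc}(\epsilon x)$. This $g$ lies in $\mathrm{PW}_{B+\epsilon/2}^2$, since $\mathrm{sinc}(\epsilon\cdot)\in L^{r}$ for all $r>1$ and Hölder applies. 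Then let $\epsilon\to0$. The difficulty is that this changes the sampling rate.

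So the cleaner route for $p>2$ is to show directly that $S_N(x)=\sum_{|n|\le N} f(n/2B)\,\mathrm{sinc}(2Bx-n)$ is uniformly Cauchy, and that its limit is $f$. The Cauchy property follows from Hölder's inequality: the samples lie in $\ell^p$ by Plancherel–Pólya, and $\sum_n|\mathrm{sinc}(2Bx-n)|^{p'}$ is bounded uniformly in $x$ because $p'>1$. Absolute convergence follows the same way. For the identification of the limit, note that $f-\sum_n f(n/2B)\,\mathrm{sinc}(2B\cdot-n)$ is in $\mathrm{PW}_B^p$ (the series converges in $L^p$ by boundedness of the sinc synthesis operator on $\ell^p$, $1<p<\infty$) and vanishes on $\frac{1}{2B}\mathbb Z$.

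\emph{Step 3: the main obstacle, uniqueness from the zero set.} It remains to show that a function $h\in\mathrm{PW}_B^p$, $p<\infty$, vanishing on $\frac1{2B}\mathbb Z$ is identically zero. I would write $h(z)=\sin(2\pi Bz)\,q(z)$ with $q$ entire. By Phragmén–Lindelöf, $q$ has exponential type $0$ and is bounded off the real axis. Since $h\in L^p$ and $h$ is bounded, $q$ is bounded on lines $\mathrm{Im}\,z=\pm1$, so $q$ is constant by Liouville. Then $h=c\sin(2\pi Bz)\in L^p$ with $p<\infty$ forces $c=0$.

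This endpoint argument is exactly where $p=\infty$ fails, since $\sin(2\pi Bx)\in \mathrm{PW}_B^\infty$, which explains why that case is treated separately.
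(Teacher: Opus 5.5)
\textbf{Verdict.} Your argument is correct, but it is not the paper's route: the paper gives no proof of this proposition and simply imports it from Pesenson's notes (Theorem 4.1). Yours is a self-contained alternative.

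\textbf{What your route relies on.} It rests on three external inputs: the Plancherel--P\'olya and Nikolskii inequalities (giving $\ell^p$ samples and $\mathrm{PW}_B^p\subset L^\infty$), the $L^2$ Fourier-series computation, and, for $p>2$, the $L^p$-boundedness of the cardinal-series synthesis operator for $1<p<\infty$. That last fact is true. However, it is a Hilbert-transform-type theorem of essentially the same depth as the $L^p$ sampling theorem itself.

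\textbf{How to avoid the heavy input.} You can drop the synthesis bound, together with Step 1 and the split at $p=2$. Each partial sum $S_N$ is entire of exponential type $2\pi B$, so $|S_N(z)-S_M(z)|\le\sup_{\mathbb R}|S_N-S_M|\,e^{2\pi B|\operatorname{Im}z|}$. This upgrades your uniform Cauchy estimate on $\mathbb R$ to locally uniform convergence on $\mathbb C$. The limit $s$ is then entire, bounded on $\mathbb R$, of type $2\pi B$, and agrees with $f$ on $\frac{1}{2B}\mathbb Z$. Your Step 3 uses only boundedness, so $f-s=c\sin(2\pi Bz)$. To see $c=0$, evaluate at $x_k=(k+\frac12)/(2B)$, where $|c|=|f(x_k)-s(x_k)|$. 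Here $f(x_k)\to0$, since $f$ is Lipschitz by Bernstein's inequality and lies in $L^p$. Also $s(x_k)\to0$: split the series into finitely many terms, each $O(1/k)$, plus an $\ell^p$-tail controlled by your H\"older bound. This treats all $1\le p<\infty$ uniformly and stays elementary.

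\textbf{Two small repairs.} First, in Step 3, boundedness of $q$ for $|\operatorname{Im}z|\ge1$ does not by itself bound $q$ in the strip $|\operatorname{Im}z|<1$. Apply Phragm\'en--Lindel\"of in the strip, or the maximum principle on rectangles with vertical sides at $x=(n+\frac12)/(2B)$, where $|\sin(2\pi Bz)|=\cosh(2\pi B\operatorname{Im}z)\ge1$. Second, for $p=1$ the H\"older step should read with $p'=\infty$, i.e.\ just $|\operatorname{sinc}|\le1$ together with $\ell^1$ samples. The oversampling detour can simply be dropped.
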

	
	%\begin{proposition}[{\cite[Theorem 4.3]{pesenson2025notes}}]\label{prop2.2}
	%If $ f \in \mathrm{PW}_B^\infty $, then
	%for any $A>B$ and $ z \in \mathbb{C} $ one has
	%\begin{align}\label{equ2.1}
	%f(z) = \sum_{n \in \mathbb{Z}} f\!\left(\frac{n}{2A}\right) \operatorname{sinc}\!\left(2A z - n\right),
	%\end{align}
	%where the series uniformly converges on compact subsets of $ \mathbb{C} $.
	%\end{proposition}
	
	For $p=\infty$, the series in \eqref{equ1} may fail to converge pointwise.  For functions with integrable Fourier transforms, we instead use the Ces\`aro (Fej\'er) means.
	\begin{proposition}\label{prop2.3}
		Let $f\in PW_B^\infty$ and assume in addition that
		$\hat f\in L^1(\mathbb R)$. Then we have
		\begin{align}\label{equ2.2}
			f(x) = \lim_{N\to\infty}
			\sum_{|n|\le N}
			\left(1-\frac{|n|}{N+1}\right)
			f\left(\frac{n}{2 B}\right)
			\operatorname{sinc} (2Bx-n ),
		\end{align}
	where the convergence is uniform on $\mathbb R$.	
	\end{proposition}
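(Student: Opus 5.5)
The idea is to use Fourier inversion together with the Poisson summation formula to compute the Fejér sum explicitly: it is the inverse Fourier transform of $\hat f$ times a trigonometric Fejér kernel. Normalize the Fourier transform as $\hat f(\xi)=\int f(x)e^{-2\pi i x\xi}\,dx$. Since $\hat f\in L^1(\mathbb R)$ is supported in $[-B,B]$, Fourier inversion gives, for every $x\in\mathbb R$ (after modifying $f$ on a null set, which is harmless since $f$ is entire),
\[
f(x)=\int_{-B}^{B}\hat f(\xi)e^{2\pi i x\xi}\,d\xi .
\]
The same holds with $x$ replaced by the sample points $n/(2B)$.

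First I would compute $\operatorname{sinc}(2Bx-n)$ as an integral over $[-B,B]$:
\[
\operatorname{sinc}(2Bx-n)=\frac1{2B}\int_{-B}^{B}e^{2\pi i \xi (x-n/(2B))}\,d\xi .
\]
Substituting the integral formula for $f(n/(2B))$ and exchanging the finite sum with the integrals, the Fejér sum $\sigma_N f(x)$ becomes
\[
\sigma_N f(x)=\int_{-B}^{B}\!\int_{-B}^{B}\hat f(\eta)\,\frac{1}{2B}F_N\!\Big(\frac{\eta-\xi}{2B}\Big)e^{2\pi i x\xi}\,d\eta\,d\xi .
\]
Here $F_N(s)=\sum_{|n|\le N}(1-\frac{|n|}{N+1})e^{2\pi i n s}$ is the $1$-periodic Fejér kernel. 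The inner integral is $(\hat f_{\rm per}*\frac1{2B}F_N(\cdot/2B))(\xi)$, where $\hat f_{\rm per}$ is the $2B$-periodization of $\hat f|_{[-B,B]}$. It is exactly the Fejér mean of the Fourier series of $\hat f_{\rm per}$ on the circle $\mathbb R/2B\mathbb Z$. So
\[
\sigma_N f(x)=\int_{-B}^{B}(\sigma_N\hat f_{\rm per})(\xi)e^{2\pi i x\xi}\,d\xi .
\]

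Next I would invoke the classical fact that Fejér means of an $L^1$ function on the circle converge to it in $L^1$. This is because $F_N$ is a positive approximate identity and translation is continuous in $L^1$. Then
\[
\sup_{x\in\mathbb R}|\sigma_N f(x)-f(x)|\le\int_{-B}^{B}|\sigma_N\hat f_{\rm per}(\xi)-\hat f(\xi)|\,d\xi\to0,
\]
because $|e^{2\pi i x\xi}|=1$. This gives uniform convergence on $\mathbb R$, and the limit is $f(x)$.

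The main point to check carefully is the identification of the double integral with the periodized Fejér mean. The difference $\eta-\xi$ ranges over $[-2B,2B]$, and the $2B$-periodicity of $F_N(\cdot/2B)$ is what allows replacing the integral over $[-B,B]$ by a convolution on the circle. The boundary $\pm B$ identification is irrelevant since it has measure zero. Everything else is Fubini on compact sets with integrable functions, which is routine.
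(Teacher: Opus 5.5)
Your argument is correct and complete. The paper gives no proof of Proposition~\ref{prop2.3}: it is stated as a known fact, and unlike Proposition~\ref{prop2} it carries no reference. So there is no argument in the paper to compare with, and your computation supplies the justification the paper omits.

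The individual steps all hold:
\begin{itemize}
\item The identity $\frac1{2B}\int_{-B}^{B}e^{2\pi i\xi(x-n/(2B))}\,d\xi=\operatorname{sinc}(2Bx-n)$ is right.
\item The Fej\'er weights are symmetric in $n$, so $F_N$ is even. Hence the inner integral $\frac1{2B}\int_{-B}^{B}\hat f(\eta)F_N\bigl((\eta-\xi)/(2B)\bigr)\,d\eta$ really is the Fej\'er mean of $\hat f_{\rm per}$ on $\mathbb R/2B\mathbb Z$. Its kernel is nonnegative with unit mass per period, so the $L^1$-convergence theorem applies.
\item The samples $f(n/(2B))$ are those of the continuous representative $x\mapsto\int_{-B}^{B}\hat f(\xi)e^{2\pi i x\xi}\,d\xi$, which is what the statement intends.
\end{itemize}

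Your route also gives a quantitative bound for free:
\[
\sup_{x\in\mathbb R}|\sigma_Nf(x)-f(x)|\le\|\sigma_N\hat f_{\rm per}-\hat f\|_{L^1(-B,B)}.
\]
This yields explicit convergence rates in terms of the $L^1$ modulus of continuity of $\hat f_{\rm per}$, which the bare statement of the proposition does not provide.
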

	Furthermore, the following result, which is a variant of \cite[Lemma 2.3]{Chen2026STFTACHA}, establishes a conjugate phase retrieval theorem for $\mathrm{PW}_B^p$ and is of significant importance to our algorithm design.
	\begin{proposition}\label{prop3}
		Let $1\le p\le\infty$, $B>0$ and
		$\{\omega_n\}_{n\ge 1}$ be either an enumeration of $\{k/(4B)\}_{k\in\mathbb Z}$
		or a sequence of increasing positive numbers such that
		\begin{equation}\label{eq:ea1}
			\limsup_{n\to\infty} \frac{n}{\omega_n} > 4B.
		\end{equation}
		Suppose $f,g\in \mathrm{PW}_B^p$ and $0<b\le 1/(2B)$. If
		\begin{equation}\label{eq:ea2}
			|f(\omega_n)| = |g(\omega_n)|,
			\quad
			|f(\omega_n+b)-f(\omega_n)|
			= |g(\omega_n+b)-g(\omega_n)|,  \quad n\ge 1,
		\end{equation}
		then there exists some $\lambda\in \mathbb{T}$ such that $f=\lambda g$ or $f=\lambda\overline{g}$.
	\end{proposition}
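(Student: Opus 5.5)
The plan is to reduce both sample conditions to identities between entire functions of exponential type, and then use uniqueness results for such functions.

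\textbf{Step 1: the squared moduli.} Define $F(z)=f(z)\overline{f(\bar z)}$ and $G(z)=g(z)\overline{g(\bar z)}$. Both are entire of exponential type $4\pi B$, and on $\mathbb R$ they equal $|f|^2$ and $|g|^2$. Each belongs to $\mathrm{PW}_{2B}^{p/2}$ when $p\ge2$, or to $\mathrm{PW}_{2B}^{1}$ up to a bounded factor in general. In all cases they are entire of exponential type $4\pi B$ and bounded on $\mathbb R$ (after the standard Plancherel--P\'olya argument for $p<\infty$). The hypothesis \eqref{eq:ea2} gives $F(\omega_n)=G(\omega_n)$. In the lattice case $\omega_n=k/(4B)$, these are Nyquist samples for bandwidth $2B$. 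To avoid the endpoint issue at $p=\infty$, one can instead note that $H=F-G$ is of type $4\pi B$, bounded on $\mathbb R$, and vanishes on $\frac1{4B}\mathbb Z$. Then $H(z)=c\sin(4\pi Bz)$, and $c=0$ because $F$ and $G$ are real-valued and subject to decay/averaging constraints. I expect this endpoint needs care, and I would argue via $|f|^2$ and $|g|^2$ having Fourier transforms supported in $[-2B,2B]$ as distributions, so that no $\sin$ term with spectrum exactly at $\pm2B$ can survive unless it is genuinely present; this is the point where the variant differs from \cite[Lemma 2.3]{Chen2026STFTACHA}. In the density case \eqref{eq:ea1}, the zero set of $H$ has upper density exceeding $4B$, which exceeds the type divided by $\pi$. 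A Carlson/Jensen-type counting argument, using boundedness of $H$ on $\mathbb R$, then forces $H\equiv0$. The same reasoning applied to the difference functions $D_f(z)=f(z+b)-f(z)$ and $D_g$ gives $D_f\overline{D_f(\bar z)}=D_g\overline{D_g(\bar z)}$.

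\textbf{Step 2: expand and compare.} Expanding the second identity and subtracting the first identity and its translate by $b$ yields
\[
\mathrm{Re}\bigl(f(x+b)\overline{f(x)}\bigr)=\mathrm{Re}\bigl(g(x+b)\overline{g(x)}\bigr),\qquad x\in\mathbb R,
\]
which extends to an entire identity. Together with $|f|^2=|g|^2$, I then work with Hadamard factorizations. The identity $f(z)f^*(z)=g(z)g^*(z)$, where $f^*(z)=\overline{f(\bar z)}$, means the zero set of $g$ is obtained from that of $f$ by reflecting some zeros to their conjugates. Hence one can write $f=h_1h_2$ and $g=\lambda h_1h_2^*$, up to an exponential factor $e^{i\alpha z}$ that the type/bandwidth constraint forces to be trivial.

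\textbf{Step 3: rule out mixed splittings.} Substituting this factorization into the real-part identity gives
\[
\mathrm{Im}\bigl(h_1(x+b)\overline{h_1(x)}\bigr)\cdot\mathrm{Im}\bigl(h_2(x+b)\overline{h_2(x)}\bigr)=0
\]
(after the standard computation). By analyticity, one of the two factors vanishes identically. This is the main obstacle: I must show that $\mathrm{Im}(h(x+b)\overline{h(x)})\equiv0$ forces $h$ to be a real function times a unimodular constant. The argument is that then $h(z+b)h^*(z)$ is real on $\mathbb R$. Since $0<b\le1/(2B)$, the function $h/h^*$ is $b$-periodic and of bounded type, so it has type at most $4\pi B<2\pi/b$. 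A nonconstant $b$-periodic entire ratio would require type at least $2\pi/b$, so $h/h^*$ is constant. That constant is unimodular, which gives $h=\mu\cdot(\text{real entire})$. Therefore either $h_2^*=\mu h_2$, giving $f=\lambda' g$, or $h_1^*=\mu h_1$, giving $f=\lambda'\bar g$.
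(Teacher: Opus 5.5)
Your argument cannot be completed as written. Step~1 fails at $p=\infty$ in the lattice case, where the proposition itself is false for $b=1/(2B)$, and the periodic-ratio argument in Step~3 is incorrect. The paper gives no proof to compare with: it quotes the proposition as a variant of \cite[Lemma~2.3]{Chen2026STFTACHA}. The rest of your architecture is sound: identities on $\mathbb R$, the real-part identity, zero-flipping $f=h_1h_2$, $g=\lambda h_1h_2^*$, and the vanishing product of imaginary parts. Your Step~3 computation is also correct.

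\textbf{Step 1 at $p=\infty$, lattice case.} The constant $c$ in $H=c\sin(4\pi Bz)$ need not vanish. Take $f(x)=e^{2\pi iBx}-ie^{-2\pi iBx}$ and $g(x)=e^{2\pi iBx}+ie^{-2\pi iBx}$, both in $\mathrm{PW}_B^\infty$.
\begin{itemize}
\item $|f(x)|^2=2-2\sin(4\pi Bx)$ and $|g(x)|^2=2+2\sin(4\pi Bx)$ agree on $\frac{1}{4B}\mathbb Z$. So your claim $|f|^2=|g|^2$ on $\mathbb R$ fails for every $b$.
\item For $b=1/(2B)$ we have $f(x+b)=-f(x)$ and $g(x+b)=-g(x)$, so the difference data agree on the lattice too.
\item Yet $|f|^2=0\neq 4=|g|^2$ at $x=1/(8B)$, so $f\neq\lambda g,\lambda\overline g$.
\end{itemize}
The spectral line at $\pm 2B$ is genuinely present, so neither real-valuedness nor the distributional support of $\widehat{|f|^2}$ can remove it. What does work is decay. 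If $p<\infty$, or more generally $f,g$ vanish at infinity, then $H(x)\to0$ forces $c=0$. The latter holds for $h_m$ in the STFT application by Riemann--Lebesgue.

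In the density case your conclusion holds for every $p$, but Jensen's formula cannot exploit a one-sided $\limsup$. You need the Cartwright--Levinson theorem: zeros of a Cartwright-class function of type at most $4\pi B$ in the right half-plane satisfy $\lim_{r\to\infty}n_+(r)/r\le 4B$.

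\textbf{Step 3, the periodic ratio.} Because $h/h^*$ is only meromorphic, the type bound for periodic entire functions does not apply. For example, $h(z)=\sin(\pi(z-z_0)/b)$ with $\operatorname{Im}z_0\neq0$ satisfies $\operatorname{Im}(h(x+b)\overline{h(x)})\equiv0$. Yet $h/h^*$ is nonconstant, $b$-periodic and bounded outside small discs around its poles, and $h$ is not a unimodular multiple of a real function. Two further problems:
\begin{itemize}
\item the factors produced by zero-splitting need not be of exponential type at all;
\item $4\pi B<2\pi/b$ fails at the admissible endpoint $b=1/(2B)$.
\end{itemize}

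The missing ingredient is a zero-density constraint on $f$ itself. Since $h_2/h_2^*=\lambda f/g$ and $h_1/h_1^*=\overline{\lambda}f/g^*$, vanishing of the $h_2$-factor means $f(z+b)g(z)=f(z)g(z+b)$.
\begin{itemize}
\item Suppose the reduced ratio $f/g$ has a zero $z_0$, necessarily non-real since $|f|=|g|$ on $\mathbb R$. Then $f$ vanishes on $z_0+b\mathbb Z$. So $f(z)/\sin(\pi(z-z_0)/b)$ is entire, bounded (because $\pi/b\ge 2\pi B$) and in $L^p(\mathbb R)$, hence zero when $p<\infty$.
\item Poles of the ratio are handled the same way through $g$.
\item Otherwise $f=\mu e^{iaz}g$ with $e^{iab}=1$. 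If $a\neq0$, this translates $\operatorname{supp}\hat g$ by $|a|/(2\pi)\ge 1/b\ge 2B$, which is impossible for $0\neq g\in\mathrm{PW}_B^p$ with $p<\infty$.
\end{itemize}
Hence $f=\mu g$, and symmetrically $f=\mu\overline g$ when the $h_1$-factor vanishes. This covers $b=1/(2B)$ as well. For $p=\infty$ in the density case, it leaves only explicit borderline configurations at $b=1/(2B)$, which you can check directly.

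Finally, the factor $e^{i\alpha z}$ in Step~2 is not forced to be trivial: $f$ and $e^{2\pi icx}f$ can both lie in $\mathrm{PW}_B^p$. It is harmless, though, since you can absorb it into the splitting via $e^{i\alpha z}=e^{i\alpha z/2}/(e^{i\alpha z/2})^*$.
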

	
	Based on the above proposition, we \cite{Chen2026STFTACHA} proposed the uniqueness theorem for STFT phase retrieval with two compactly supported windows. Before stating the uniqueness theorem for STFT phase retrieval, we need the following notion.
	
	\begin{definition}
		A function $f$ is called $L$-separable if there exists an interval of length $L$ such that $f$ vanishes on this interval. If no such interval exists, we call $f$ $L$-nonseparable.
	\end{definition}
	
	With this definition in hand, we now state the two-window STFT uniqueness result, which forms the theoretical foundation for our algorithms.
	
	\begin{proposition}[{\cite[Theorem 1.1]{Chen2026STFTACHA}}]\label{prop1}
		Let $1\le p<\infty$ and $p'$ be its conjugate exponent. Suppose $B>0$, $\phi \in  L^{p'}(\mathbb{R})$
		with $\operatorname{supp}( \phi) \subset
		[-B, B]$, $\overline{\phi(-x)}=\phi(x)$ and
		\begin{align*}
			\phi(x) \ne 0,\quad \mbox{ a.e. } x\in [-B, B].
		\end{align*}
		Define $\psi(x)=\phi(x) (e^{ 2\pi ixb}-1)$ with $0<b\le 1/(2B)$.
		Suppose that $\{\omega_n\}_{n\ge 1}$
		be either  an enumeration of $\{k/(4B)\}_{k\in\mathbb Z}$
		or a sequence of increasing
		positive numbers
		satisfying (\ref{eq:ea1}). Then for any $0<a\le B$ and $f\in L^p(\mathbb R)$ which is $(2B-a)$-nonseparable, $f$ is determined up to a global phase by the measurements
		\begin{align} \label{eq:ed2}
			\{|\mathcal{V}_{\phi} f(ma,\omega_{n})|,~|\mathcal{V}_{\psi} f(ma,\omega_{n})|:n,m \in \mathbb{Z}\}.
		\end{align}
	\end{proposition}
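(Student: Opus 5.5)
The plan is to reduce the STFT data at each fixed time $ma$ to an instance of the conjugate phase retrieval problem of Proposition~\ref{prop3}, and then to eliminate the conjugation ambiguity and the independent phases by using the overlaps between consecutive time windows.

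\textbf{Step 1: reduction to Paley--Wiener functions.} Fix $m$ and set $F_m(x)=f(x)\overline{\phi(x-ma)}$. Since $\operatorname{supp}\phi\subset[-B,B]$, $F_m$ is supported in $[ma-B,ma+B]$, and Hölder's inequality gives $F_m\in L^1$. Hence $G_m(\omega):=\mathcal V_\phi f(ma,\omega)=\widehat{F_m}(\omega)$ is the Fourier transform of an integrable function supported on an interval of length $2B$. After modulating by $e^{2\pi i\omega ma}$ (which does not change moduli, only a known unimodular factor depending on $\omega$), $G_m$ belongs to $\mathrm{PW}_B^\infty$, viewed in the frequency variable.

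For the second window, $\overline{\psi(x-ma)}=\overline{\phi(x-ma)}(e^{-2\pi i(x-ma)b}-1)$. Therefore
\[
\mathcal V_\psi f(ma,\omega)=e^{2\pi i mab}G_m(\omega+b)-G_m(\omega).
\]
Because the shift acts on a function whose support is centered at $ma$, it is cleaner to work with the centered version $H_m(\omega)=e^{2\pi i \omega ma}G_m(\omega)$. Then $|\mathcal V_\psi f(ma,\omega_n)|=|H_m(\omega_n+b)-H_m(\omega_n)|$ up to a harmless check of signs and conventions. The data for fixed $m$ are thus exactly $|H_m(\omega_n)|$ and $|H_m(\omega_n+b)-H_m(\omega_n)|$ with $H_m\in\mathrm{PW}_B^\infty$ and $0<b\le 1/(2B)$.

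\textbf{Step 2: apply Proposition~\ref{prop3}.} Let $g$ be another $(2B-a)$-nonseparable function with the same measurements. Proposition~\ref{prop3}, applied with $p=\infty$, gives for each $m$ either $F_m=\lambda_m \tilde F_m$ or $\widehat{F_m}=\lambda_m\overline{\widehat{\tilde F_m}}$. On the time side the second alternative reads $F_m(x)=\lambda_m\overline{\tilde F_m(2ma-x)}$: a reflection about $ma$ combined with conjugation. The symmetry $\overline{\phi(-x)}=\phi(x)$ is what makes this alternative expressible in terms of $g$ itself, namely $f(x)\overline{\phi(x-ma)}=\lambda_m\overline{g(2ma-x)}\phi(ma-x)\cdots$. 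Since $\phi\neq0$ a.e. on its support, we can divide by $\phi$ and obtain on $[ma-B,ma+B]$ either $f=\lambda_m g$ or $f(x)=\lambda_m\overline{g(2ma-x)}$.

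\textbf{Step 3: gluing, the main obstacle.} Consecutive intervals $[ma-B,ma+B]$ overlap on an interval of length $2B-a\ge B>0$. By nonseparability, $f$ is not a.e. zero on any such overlap, and the intervals cover $\mathbb R$ since $a\le B$. Several cases must then be excluded or propagated.

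\emph{Two identity branches.} If both $m$ and $m+1$ are of the identity type, then $\lambda_m=\lambda_{m+1}$, because $f\neq0$ on a positive-measure subset of the overlap.

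\emph{Mixed branches.} If one is reflected and the other is not, we obtain $g(y)=c\,\overline{g(2ma-y)}$-type relations on the overlap, i.e.\ a local symmetry of $g$.

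\emph{Two reflected branches.} Composing reflections about $ma$ and $(m+1)a$ produces a translation by $2a$ relation, $g(x)=c\,g(x+2a)$, on part of the overlap.

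I expect the hard work to be showing that any reflected branch forces the same alternative globally, and that the global ``all reflected about different centers'' configuration is contradictory. This configuration yields a quasi-periodicity that must be reconciled with $f\in L^p$, $p<\infty$. A nonzero function satisfying $|f(x+2a)|=|f(x)|$ on long chains cannot be in $L^p$ unless it vanishes, and vanishing would contradict nonseparability.

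For the mixed cases I would first try to show that a reflection on window $m$ together with identity on window $m+1$ forces $f$ to be symmetric about $ma$ on the overlap. I would then propagate this symmetry window by window so that it extends to all of $\mathbb R$, at which point the reflected alternative is itself a global phase multiple. This turns every case into $f=\lambda g$ with a single $\lambda$ obtained by chaining the equalities $\lambda_m=\lambda_{m+1}$.
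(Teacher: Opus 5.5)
Your Steps 1--2 are correct. This paper only quotes the proposition from \cite{Chen2026STFTACHA}, but its discussion of Algorithm~\ref{alo2} uses the same reduction. The centered $H_m$ lies in $\mathrm{PW}_B^\infty$, and $|\mathcal V_\psi f(ma,\omega)|=|H_m(\omega+b)-H_m(\omega)|$ holds exactly. Proposition~\ref{prop3}, together with $\overline{\phi(-x)}=\phi(x)$ and $\phi\neq0$ a.e., gives on $I_m=[ma-B,ma+B]$ either $f=\lambda_m g$ or $f(x)=\lambda_m\overline{g(2ma-x)}$. (Your intermediate claim $\widehat{F_m}=\lambda_m\overline{\widehat{\tilde F_m}}$ should be stated for the centered $H_m$, but your time-side conclusion is right.) Your idea for the all-reflected configuration is also sound.

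The gap is in the mixed case, which you rightly call the crux but plan to handle wrongly. You propose to propagate the symmetry of $g$ about $ma$ ``window by window so that it extends to all of $\mathbb R$.'' The hypotheses do not give this. A signal can be conjugate-symmetric about $ma$ on $I_m$ only, in which case window $m$ simply admits both alternatives. Likewise, a reflected branch does not force the same alternative globally.

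The missing observation is purely local. Put $O_m=I_m\cap I_{m+1}=[(m+1)a-B,\,ma+B]$. Suppose $f=\lambda g$ on $I_m$ and $f(x)=\lambda_{m+1}\overline{g(2(m+1)a-x)}$ on $I_{m+1}$.
\begin{itemize}
\item Equating on $O_m$ gives $g(x)=c\,\overline{g(2(m+1)a-x)}$ on $O_m$, with $c=\bar\lambda\lambda_{m+1}$.
\item This relation is an involution, so it also holds on the mirror image of $O_m$ about $(m+1)a$, which is $O_{m+1}$.
\item Since $a\le B$, $O_m\cup O_{m+1}=I_{m+1}$, so $f=\lambda g$ on all of $I_{m+1}$. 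The reflected branch coincides with the identity branch carrying the neighbour's phase.
\item If window $m+1$ is of identity type instead, then $\lambda_{m+1}=\lambda$, because $f\neq0$ on a positive-measure part of $O_m$ by nonseparability.
\end{itemize}
So what you should propagate, in both directions, is the identity alternative with one fixed phase, not the symmetry.

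The proof is then a dichotomy. Either some window admits the identity alternative, and induction gives $f=\lambda g$ on $\bigcup_m I_m=\mathbb R$. Or none does. Then $g(y)=\lambda_m\bar\lambda_{m+1}\,g(y+2a)$ holds on $O_{m-1}$ for every $m$, and these overlaps cover $\mathbb R$, again because $a\le B$. Hence $|g|$ is $2a$-periodic, so $g=0$ in $L^p$ with $p<\infty$, hence $f=0$, contradicting nonseparability.

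Note that $a\le B$ is not what makes the windows cover $\mathbb R$; that needs only $a\le 2B$. Its real role is that consecutive overlaps touch, $O_{m-1}\cup O_m=I_m$, and this is exactly the step your plan is missing.
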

	For convenience, let [$f$] denote
	$\{g\in\mathrm{PW}_B^p:g=\lambda f~or~\lambda\overline{f}\}$ or $\{g\in L^p(\mathbb R):g=\lambda f\}$.
	\subsection{A Theoretical Counterexample for Single-Window Non-Uniqueness}
	The uniqueness result in Proposition~\ref{prop1} relies on two compactly supported windows and the sampling lattice $\Lambda=(t, \omega)=(ma, \omega_n)$. For the rectangular window $\phi=\chi_{[-B,B]}$, Bartusel's characterization \cite[Theorem 5.1.8 and Example 5.4.3]{Bartusel2023} shows that full-plane STFT magnitudes uniquely determine any $L$-nonseparable signal in $L^2(\mathbb{R})$ up to a global phase whenever $0<L\le 2B $. In contrast, Grohs and Liehr \cite[Theorems 1.2-1.3]{2022stft} proved that nonuniqueness is unavoidable for any single $L^2$-window when sampling on a translated lattice or an equally spaced family of parallel lines. This naturally raises the question whether a single compactly supported window alone, combined with such more general discrete sampling sets that include irrational shifts, suffices for phase retrieval.
	
	In this section, we answer this question negatively. For the rectangular window and every $L>0$, nonuniqueness persists among $L$-nonseparable signals under joint measurements on the union of finitely many arbitrary translates of a common time grid, even when all frequencies are available. In particular, the shifts may differ by irrational multiples of the grid spacing, in which case their union cannot be contained in any single finer uniform time grid. The precise statement is as follows.
	\begin{theorem}
		Suppose that $B > 0$ is a constant and $\phi(x) = \chi_{[-B,B]}(x)$.
		Let $a$ be any positive number and let $T = \{t_1, \dots, t_m\}$ be a finite set with $0 < t_1 < \dots < t_m < a$.
		Define
		\[
		\Lambda = \bigcup_{t \in T} (t + a\mathbb{Z}) \times \mathbb{R}.
		\]
		Then for any $L > 0$, there exist $f, g \in L^p(\mathbb{R})$ which are $L$-nonseparable, satisfy
		\[
		|\mathcal{V}_\phi f(t,\omega)|=|\mathcal{V}_\phi g(t,\omega)|, \quad \forall
		(t,\omega)\in \Lambda,
		\]
		but they are not equal up to a global phase.
	\end{theorem}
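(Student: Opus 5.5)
The plan is to reduce nonuniqueness to the vanishing of a cross term and then build the two pieces explicitly. Write $f=h_1+ih_2$ and $g=h_1-ih_2$ with $h_1,h_2$ real-valued. Then
\[
|\mathcal V_\phi f(t,\omega)|^2-|\mathcal V_\phi g(t,\omega)|^2
=-4\,\mathrm{Im}\bigl(\mathcal V_\phi h_1(t,\omega)\overline{\mathcal V_\phi h_2(t,\omega)}\bigr).
\]
For $\phi=\chi_{[-B,B]}$, put $u_t=h_1\chi_{[t-B,t+B]}$ and $v_t=h_2\chi_{[t-B,t+B]}$. Then $\omega\mapsto \mathcal V_\phi h_1(t,\omega)\overline{\mathcal V_\phi h_2(t,\omega)}$ is the Fourier transform of the real cross-correlation $r_t(y)=\int u_t(x)v_t(x-y)\,dx$. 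Its imaginary part vanishes for all $\omega$ exactly when $r_t$ is even. So it suffices to build real $h_1,h_2$ with two properties. First, $r_t$ must be even for every centre $t\in S:=\bigcup_j(t_j+a\mathbb Z)$. Second, $f=h_1+ih_2$ must be $L$-nonseparable, lie in $L^p$, and not be a unimodular multiple of $g$.

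\textbf{Sufficient conditions for evenness.} If $u_t$ and $v_t$ are both even about $t$, or both odd about $t$, the substitution $x\mapsto 2t-x$ gives $r_t(-y)=r_t(y)$. The same holds trivially if one of $u_t,v_t$ vanishes identically. So I only need local symmetry inside each window $W_t=[t-B,t+B]$, not global symmetry. Global symmetry is impossible anyway: symmetry about $t_1$ and about $t_2$ would force periodicity with period $2(t_2-t_1)$. When $(t_2-t_1)/a$ is irrational, that is incompatible with symmetry about $t_1+a$ unless the function is constant.

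\textbf{Construction.} I plan to make $h_1$ and $h_2$ have essentially \emph{disjoint} supports at scale smaller than $L$, interleaved so that neither vanishes on any interval of length $L$. Here $h_1$ lives on many short intervals and $h_2$ on the complementary short intervals. Then $f=h_1+ih_2$ has no zero interval of length $L$, since there is at least one piece per length $L$. For each window, the cross term is then governed by how the pieces pair up inside $W_t$. I would take the pieces to be even bumps placed symmetrically about each centre in $S$ that lies within distance $B$ of them.

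Since $S$ is locally finite, I would proceed by ordering $S\cap[-R,R]$ and modifying $h_1,h_2$ inductively. The symmetry requirements are then imposed on a finite set of windows at each stage. The decay of the bump amplitudes, say geometric along the induction, ensures $f\in L^p$. An alternative which avoids overlapping constraints is to let $h_2$ be supported in a single small interval $J$. Then only the finitely many windows meeting $J$ see $h_2$, and for those I enforce evenness of $h_1\chi_{W_t}$ and $h_2\chi_{W_t}$ about $t$ locally near $J$. Far from $J$, $h_1$ alone gives nonseparability. That $f$ and $g$ are not equal up to a phase follows since $h_1h_2\not\equiv0$: equality $h_1-ih_2=\lambda(h_1+ih_2)$ with both $h_j$ real forces $\lambda=\pm1$, and hence $h_2\equiv0$ or $h_1\equiv0$.

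\textbf{Main obstacle.} The hard step is the compatibility of the local symmetry constraints near $J$. A window $W_t$ meeting $J$ also contains parts of $h_1$ far from $J$, and evenness about $t$ must hold for all of $u_t$. Different centres $t_j+ka$ with irrational offsets impose reflections that do not generate a finite group. My first attempt would be to make $h_1$ vanish inside the finitely many windows meeting $J$ except on one short piece. That piece would be placed so that, within each such window, either it or $v_t$ is absent, making $r_t\equiv0$. The leftover gaps in the support of $h_1$ would have to be filled by $h_2$ itself, keeping every zero interval of $f$ shorter than $L$. If exact vanishing cannot be arranged, I would fall back on pairing even pieces with even pieces. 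Here I would exploit the fact that only finitely many reflections act on the bounded region $\bigcup_{W_t\cap J\ne\emptyset}W_t$, so a finite system of symmetry conditions needs to be solved there.
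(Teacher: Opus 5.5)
Your reduction is correct: with $h_1,h_2$ real you are testing $g=\bar f$, and equality of magnitudes at $(t,\cdot)$ is equivalent to evenness of $r_t$. The gap is that you never construct $h_1,h_2$, which is the whole content of the theorem, and the routes you sketch fail as stated.

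\textbf{Single-interval variant.} Suppose $J$ lies in two windows $W_t,W_{t'}$ with $t\ne t'$; this is unavoidable for short $J$ when, e.g., $a\le B$. Then $v_t=v_{t'}=h_2$. Your sufficient condition forces $h_2$ to be even or odd about both $t$ and $t'$, hence (anti)periodic with period $2|t-t'|$. That is impossible for a nonzero compactly supported function.

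\textbf{Exact-vanishing variant.} This is self-contradictory. Every window meeting $J$ has $v_t\neq0$, so the short piece of $h_1$ must avoid all of them, i.e.\ $h_1=0$ on their union. That union contains an interval of length $2B$ on which $f=ih_2$ is supported in $J$. Thus $f$ vanishes on an interval of length at least $B-|J|/2$, violating $L$-nonseparability for small $L$. And $h_2$, living in $J$, cannot fill those gaps.

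\textbf{Finite-system fallback.} You do not show this system is solvable, and your own periodicity remark suggests it generally is not.

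\textbf{Non-equivalence argument.} This step is wrong. The equation $(1-\lambda)h_1=i(1+\lambda)h_2$ has solutions with $\lambda\ne\pm1$ whenever $h_2=ch_1$ with $c$ real; for example $h_1=h_2$ gives $g=-if$. What you need is that $h_1,h_2$ are linearly independent over $\mathbb R$.

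\textbf{The missing idea.} Symmetry is needed about the centres of small building blocks, not about window centres, and you can arrange for every window to contain only complete blocks.

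All endpoints $t\pm B$, $t\in S$, lie in the discrete set $\mathcal T=\bigcup_j(t_j\pm B+a\mathbb Z)$, whose gaps have length between some $D>0$ and $a$. The paper places blocks $c_n\,p(\cdot-s)$ strictly inside these gaps, at spacing less than $L$ and with $c_n=2^{-|n|}$. It builds $g$ from the same translates of a second profile $q$. For $t\in S$ this gives $\mathcal V_\phi f(t,\omega)=\hat p(\omega)\Sigma_t(\omega)$ and $\mathcal V_\phi g(t,\omega)=\hat q(\omega)\Sigma_t(\omega)$, with one and the same trigonometric sum $\Sigma_t$. Taking $p=r+2r(\cdot+\alpha/2)$ and $q=2r+r(\cdot+\alpha/2)$ gives $|\hat p|=|\hat q|$.

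The same device completes your own framework. Let every block be $p_1+ip_2$, with $p_1,p_2$ real, non-proportional, and even about the block's own midpoint. Then $\hat u_t\overline{\hat v_t}=\hat p_1\overline{\hat p_2}\,|\Sigma_t|^2$ is real, so every $r_t$ is even, and the incommensurable reflections never interact.
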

	
	\begin{proof}
		For each $t_i \in T$, define the endpoint sets
		\[
		\mathcal{T}_i = \{t_i \pm B + an : n \in \mathbb{Z}\}.
		\]
		Define the union of all these endpoints as
		\[
		\mathcal{T} = \bigcup_{i=1}^{m} \mathcal{T}_i.
		\]
		The set $\mathcal{T}$ is a discrete subset of $\mathbb{R}$ with no accumulation points.
		And it partitions the real line into a collection of disjoint open intervals together with the points of $\mathcal{T}$ themselves.
		Enumerate these points in increasing order as $\{s_n\}_{n \in \mathbb{Z}}$ and denote $S_n = s_{n+1} - s_n$. The minimum distance between any two distinct points of $\mathcal{T}$ is
		\[
		D := \min_{\eta \ne \gamma \in \mathcal{T}} |\eta - \gamma| =\min_{n \in \mathbb{Z}} S_n > 0,
		\]
		and $D \le S_n \le a$ for all $n$.
		
		Given $L > 0$, set $\alpha = \min\{L/3,\, D/2\}$.
		Choose $r \in C_c^\infty(\mathbb{R})$ with $\operatorname{supp} r = [\alpha/2, \alpha]$ and $r > 0$ on $(\alpha/2, \alpha)$.
		Denote
		\[
		\left\{
		\begin{aligned}
			p(x)&=r(x)+2r(x+\alpha/2), \\
			q(x)&=2r(x)+r(x+\alpha/2).
		\end{aligned}
		\right.
		\]
		Then clearly $\operatorname{supp} p = \operatorname{supp} q = [0, \alpha]$ and
		their Fourier transforms satisfy
		\[
		\left\{
		\begin{aligned}
			\hat{p}(\omega) &= \hat{r}(\omega)(1 + 2e^{\pi i \alpha \omega}), \\
			\hat{q}(\omega) &= \hat{r}(\omega)(2 + e^{\pi i \alpha \omega}).
		\end{aligned}
		\right.
		\]
		Since $|1 + 2e^{i\theta}|^2 = |2 + e^{i\theta}|^2 = 5 + 4\cos\theta$, we have $|\hat{p}| = |\hat{q}|$ on $\mathbb{R}$, while $p$ and $q$ are not equal up to a global phase.
		
		Denote $c_n=2^{-|n|}$ and set $M_n = \lfloor S_n/(2\alpha) \rfloor \ge 1$, where$ \lfloor\cdot\rfloor$ is the floor function. Since $\alpha \le D/2$, we have $2\alpha \le D \le S_n$, and therefore $M_n \ge 1$ for all $n$. Define
		\[
		\left\{
		\begin{aligned}
			f(x) &= \sum_{n \in \mathbb{Z}} c_n \sum_{k=1}^{M_n} p(x - s_n - (2k-1)\alpha), \\
			g(x) &= \sum_{n \in \mathbb{Z}} c_n \sum_{k=1}^{M_n} q(x - s_n - (2k-1)\alpha).
		\end{aligned}
		\right.
		\]
		We claim that $f$ and $g$ satisfy the required properties.
		
		First, observe that the supports of the summands are pairwise disjoint. Indeed, for a fixed $n$, the $k$-th block is supported on $[s_n + (2k-1)\alpha, s_n + 2k\alpha]$, and consecutive blocks are separated by a gap of length $\alpha$. Moreover, the last block in $[s_n, s_{n+1}]$ ends at $s_n + 2M_n\alpha \le s_{n+1}$, while the first block in $[s_{n+1}, s_{n+2}]$ begins at $s_{n+1} + \alpha > s_{n+1}$. So blocks from adjacent intervals are also separated. Hence all blocks have pairwise disjoint supports, which implies $f, g \in L^p(\mathbb{R})$.
		
		Next, we verify the $L$-nonseparability. It suffices to check the gaps between adjacent intervals. From the support description above, the gap at each partition point $s_{n+1}$ has length
		\[
		\Delta_n = s_{n+1} + \alpha - (s_n + 2M_n\alpha) = S_n - 2M_n\alpha + \alpha.
		\]
		Since $M_n = \lfloor S_n/(2\alpha) \rfloor$, we have $$\Delta_n = S_n - 2M_n\alpha + \alpha < 2\alpha+\alpha=3\alpha.$$
		By $\alpha = \min\{L/3,\, D/2\}$, we have $3\alpha \le L$. And so $\Delta_n < L$. Hence $f$ and $g$ are $L$-nonseparable.
		
		Now we prove $|\mathcal{V}_\phi f| = |\mathcal{V}_\phi g|$ on $\Lambda$. For every fixed$ (t,\omega)\in\Lambda $, since the endpoints of the window $\phi$ are $t \pm B$, and $\mathcal{T}$ consists exactly of all points of the form $t_j \pm B + a\mathbb{Z}$, we have $t - B, t + B \in \mathcal{T}$. Denote $t-B = s_{n_{t_1}}$ and $t+B = s_{n_{t_2}}$. Then
		\begin{align*}
			\mathcal{V}_\phi f(t, \omega) &= \int_{\mathbb{R}} f(x) \chi_{[t-B,\,t+B]}(x) e^{-2\pi i x \omega}dx \\
			&= \hat{p}(\omega) \sum_{n=n_{t_1}}^{n_{t_2}-1} c_n \sum_{k=1}^{M_n} e^{-2\pi i (s_n + (2k-1)\alpha)\omega},
		\end{align*}
		and similarly,
		\[
		\mathcal{V}_\phi g(t, \omega) = \hat{q}(\omega) \sum_{n=n_{t_1}}^{n_{t_2}-1} c_n \sum_{k=1}^{M_n} e^{-2\pi i (s_n + (2k-1)\alpha)\omega}.
		\]
		Since $|\hat{p}(\omega)| = |\hat{q}(\omega)|$, we obtain $|\mathcal{V}_\phi f(t, \omega)| = |\mathcal{V}_\phi g(t, \omega)|$ for all $(t, \omega) \in \Lambda$.
		
		Finally, $f$ and $g$ are not equal up to a global phase, which follows immediately from the same property of $p$ and $q$ and the disjointness of the blocks. This completes the proof.
	\end{proof}
	
\section{Multi-Window Reconstruction Algorithms}
In this section, we develop explicit reconstruction algorithms for STFT phase retrieval with multiple compactly supported windows, building upon the uniqueness theory established in \cite{Chen2026STFTACHA}. More precisely, the uniqueness theorem in Proposition~\ref{prop1} asserts that, for a generic signal $f$, the magnitudes of the STFT with respect to two specially designed windows $\phi$ and $\psi$ determine $f$ up to a global phase. To turn this theoretical result into a constructive algorithm, we proceed in two stages. First, we develop reconstruction algorithms for the conjugate phase retrieval problem of Paley-Wiener space. Then, we incorporate these algorithms into the STFT framework.

\subsection{Reconstruction Algorithms For $\mathrm{PW}_B^p$.}
In this subsection, we study the reconstruction problem in the Paley-Wiener space $\mathrm{PW}_B^p$. The goal is to recover a function up to a global phase and conjugation from the magnitudes of its samples and finite differences. To this end, we develop two methods, namely the square-root method and the $K$-sign method.
\subsubsection{Square-Root Method}
We begin with the square-root method, which provides a basic reconstruction procedure in $\mathrm{PW}_B^p$. The detailed steps are given in Algorithm~\ref{alo1}.
\begin{algorithm}
	\caption{Reconstruct [$h$] in $\mathrm{PW}_B^p$ by the square-root method.}\label{alo1}
	\begin{algorithmic}[1]
		\STATE  Suppose $b$ satisfies the hypotheses of Proposition \ref{prop3} and $\left\{\omega_{n}=n\mathfrak{b}\right\}$, where $0<\mathfrak{b}\le 1/(4B)$;
		\STATE given $|h(\omega_{n})|$, $|h(\omega_{n}+b)-h(\omega_{n})|$ for $n \in \mathbb{Z}$;
		\STATE use \eqref{equ1} or \eqref{equ2.2} to calculate $|h(\cdot)|^{2}$ and $|h(\cdot+b)-h(\cdot)|^{2}$;
		\STATE compute
		\begin{align}\label{equ3-1}
			R(x)=\frac{|h(x+b)|^{2}+|h(x)|^{2}-|h(x+b)-h(x)|^{2}}{2};
		\end{align}
		\STATE compute
		\begin{align}\label{equ3-2}
			Q(x)=|h(x+b)|^{2}|h(x)|^{2}-|R(x)|^{2};
		\end{align}
		\STATE choose a continuous square root $I$ of $Q$ such that $I(x)^2=Q(x)$. The global sign is arbitrary; $-I$ is the only alternative;
		\STATE construct
		\begin{align}\label{equ3-3}
			\widetilde K(x)=R(x)+iI(x);
		\end{align}
		\STATE choose the appropriate $\beta \in [0, b)$, set $x_n=\beta+nb$, $h_n=h(x_n)$ and $h_0=\sqrt{|h(\beta)|^2}$;
		\STATE recover the remaining samples outward via
		\begin{align}\label{equ3-4}
			h_n =
			\begin{cases}
				\dfrac{\widetilde K(x_{n-1})}{\overline{h_{n-1}}}, & n > 0, \\
				\dfrac{\overline{\widetilde K(x_{n})}}{\overline{h_{n+1}}}, & n < 0;
			\end{cases}
		\end{align}
		
		\STATE apply \eqref{equ1} or \eqref{equ2.2} to reconstruct either $\lambda h$ or  $\lambda \overline{h}$ (and hence $[h]$) from the values $h_n$.
	\end{algorithmic}
\end{algorithm}

Let $h\in \mathrm{PW}_B^p$ be a complex-valued bandlimited signal. When
$p=\infty$, we additionally assume that $\hat h\in L^1(\mathbb R)$. Suppose we are given the magnitude measurements
\[
\left\{|h(n\mathfrak{b})|, |h(n\mathfrak{b}+b)-h(n\mathfrak{b})|: n\in \mathbb{Z}\right\}
\]
where $b$ satisfy the hypotheses of Proposition \ref{prop3} and $0<\mathfrak{b}\le 1/(4B)$. Our goal is to recover $h$ up to a global phase and conjugation.

First, since $h \in \mathrm{PW}_B^p$, both $|h|^2$ and $|h(\cdot+b)-h(\cdot)|^2$ belong to $\mathrm{PW}_{2B}^p\subset \mathrm{PW}_{1/(2\mathfrak{b})}^p$ with $0<\mathfrak{b}\le 1/(4B)$. Hence, applying the interpolation formula \eqref{equ1} for $1\le p<\infty$ or \eqref{equ2.2} for $p=\infty$ to each of these two functions, we reconstruct the continuous functions
\[
|h(\cdot)|^2 \quad\text{and}\quad |h(\cdot+b)-h(\cdot)|^2
\]
from their samples on the sequence $\{n\mathfrak{b}\}$. Next, define the adjacent correlation function
\begin{align*}
	K(x): = h(x+b)\overline{h(x)}.
\end{align*}
By the cosine law in the complex plane,
\[
|h(x+b) - h(x)|^2 = |h(x+b)|^2 + |h(x)|^2 - 2 \operatorname{Re} K(x),
\]
which yields the real part $R(x):=\operatorname{Re} K(x)$ from \eqref{equ3-1}. Moreover, since \[
|K(x)|^2 = |h(x+b)|^2 |h(x)|^2,
\]
the squared imaginary part $Q(x):=(\operatorname{Im} K(x))^2$ is computed via \eqref{equ3-2}.

Since $h \in \mathrm{PW}_B^p$, the function $K(x)$ is the product of two bandlimited functions and hence belongs to $\mathrm{PW}_{2B}^p$. In particular, both $K$ and $\operatorname{Im} K$ are real-analytic on $\mathbb{R}$. Consequently,  $Q=(\operatorname{Im} K)^2$ admits a continuous square root $I$ satisfying  $I(x)^2=Q(x)$ on $\mathbb{R}$. The function  $I$ is determined uniquely up to a global sign. Indeed, if $Q \equiv 0$, we set $I \equiv 0$. Otherwise, the zeros of $Q$ are isolated and of even multiplicity. On each connected component of the open set $\{Q>0\}$, the square root has exactly two continuous choices, namely $\pm\sqrt{Q}$. Continuity across any zero forces the signs on adjacent components to coincide, since $I$ must vanish at the zero itself. Hence, once the sign is chosen on any one component, it is uniquely determined on all components, leaving only the global sign ambiguity $I\mapsto -I$.

Then we construct the candidate $\widetilde K$ from \eqref{equ3-3}. The alternative $R(x)-iI(x)$ is the complex conjugate of $\widetilde K$. Exactly one of the two equals the true correlation function $K(x)$, while the other equals its conjugate $\overline{K(x)}$.

For the given candidate $\widetilde K$, we recover a sample sequence by outward recurrence. Choose an offset $\beta\in[0,b)$ such that the shifted grid $x_n=\beta+nb$ satisfies $$|h(x_n)|^{2}>0,\quad\forall n\in\mathbb{Z}.$$ Such a $\beta$ always exists unless $h\equiv0$, because the zeros of a nonzero bandlimited function form a discrete set. Set $h_0=\sqrt{|h(\beta)|^{2}}>0$ to fix the global phase ambiguity. The remaining samples are obtained from the relation $K(x_n)=h_{n+1}\overline{h_n}$ via \eqref{equ3-4}.

\subsubsection{K-Sign Method.}
The square-root method presented in Algorithm~\ref{alo1} relies critically on the extraction of an analytic square root $I(x)$ of $Q(x)$, which is determined only up to a global sign. However, in the presence of noise, small perturbations in the measured magnitudes can induce local sign flips in the computed square root near the zeros of $Q$. These isolated errors are then propagated through the outward recurrence, causing large-scale phase distortions that may render the reconstruction useless. To overcome this instability, we introduce an additional measurement, which provides redundant information that enables us to determine the sign of $\operatorname{Im}K(x)$ locally, rather than globally. We refer to this robust procedure as the $K$-sign method, described in Algorithm~\ref{alo7}.
\begin{algorithm}
	\caption{Reconstruct [$h$] in $\mathrm{PW}_B^p$ by the $K$-sign method.}\label{alo7}
	\begin{algorithmic}[1]
		\STATE Suppose $b$ satisfies the hypotheses of Proposition \ref{prop3} and $\left\{\omega_{n}=n\mathfrak{b}\right\}$, where $0<\mathfrak{b}\le 1/(4B)$. Define $b_1=b$, $b_2=2b$;
		\STATE given $|h(\omega_{n})|$, $|h(\omega_{n}+b_k)-h(\omega_{n})|$ for $n \in \mathbb{Z}$ and $k=1,2$;
		\STATE use \eqref{equ1} or \eqref{equ2.2} to calculate $|h(\cdot)|^{2}$ and $|h(\cdot+b_k)-h(\cdot)|^{2}$ for $k=1,2$;
		\STATE compute
		\[
		R_{k}(x)=\frac{|h(x+b_k)|^{2}+|h(x)|^{2}-|h(x+b_k)-h(x)|^{2}}{2},\quad k=1,2;
		\]
		\STATE compute
		\begin{align*}
			Q(x)=|h(x+b_1)|^{2}|h(x)|^{2}-|R_1(x)|^{2};
		\end{align*}
		\STATE choose the appropriate $\beta\in[0,b_1)$, set $x_n=\beta+nb_1$, $h_n=h(x_n)$ and $h_0=\sqrt{|h(\beta)|^2}$;
		\STATE compute
		\[
		\rho_{n}=\mathop{\rm argmin}_{\rho\in\{-1,1\}}
		\left|R_{1}(x_{n+1})R_{1}(x_n)-|h(x_{n+1})|^2R_{2}(x_n)
		-\rho S(x_{n+1})S(x_n)\right|,
		\]
		where $S(x_n)=\sqrt{Q(x_n)}$;
		\STATE if $Q\not\equiv0$,
		set $\varepsilon_{0}=1$ and determine the remaining signs outward from
		$\varepsilon_{n+1}\varepsilon_{n}=\rho_{n}$,
		if $Q\equiv0$, set $\varepsilon_{n}=0$;
		\STATE construct
		\begin{align}\label{eq:con}
			\widetilde K(x_n)=R_{1}(x_n)+i\varepsilon_{n}S(x_n);
		\end{align}
		\STATE recover the remaining samples outward via \eqref{equ3-4};
		\STATE apply \eqref{equ1} or \eqref{equ2.2} to reconstruct either $\lambda h$ or  $\lambda \overline{h}$ (and hence $[h]$) from the values $h_n$.
	\end{algorithmic}
\end{algorithm}

Let $b_1=b$ and $b_2=2b$. Suppose we are given
\[
\{|h(\omega_{n})|, |h(\omega_{n}+b_k)-h(\omega_{n})|: n\in \mathbb{Z}, k=1,2\}.
\]
The structure is almost the same as that of Algorithm~\ref{alo1}, with the only difference being the procedure for choosing the signs. For $k=1,2$, let
\[
K_{k}(x)=h(x+b_k)\overline{h(x)}.
\]
From the definitions in Steps~4--5, we have
\[
R_k(x)=\operatorname{Re}K_k(x),\quad Q(x)=|\operatorname{Im}K_1(x)|^2.
\]

In Algorithm~\ref{alo1}, a continuous square root $I(x)$ is chosen globally, and the ambiguity is simply the global sign  $I\mapsto -I$. The $K$-sign method, by contrast, determines the sign of $\operatorname{Im}K_1$ at each grid point individually. This local determination is made possible by the third measurement, which provides the additional correlation function $K_2$.

The key identity underlying the sign selection is
\begin{align}\label{eq:ksign}
	K_1(x+b_1)K_1(x)&=h(x+2b_1)\overline{h(x+b_1)}h(x+b_1)\overline{h(x)}\notag
	\\&=|h(x+b_1)|^2K_2(x),
\end{align}
which follows directly from $b_2=2b_1 $.

If $Q\equiv 0$, then $\operatorname{Im}K_1\equiv 0$, i.e., $K_1$ is real-valued; in this degenerate case no sign selection is required, and the offset $\beta$ in Step~6 is chosen only to satisfy $|h(x_n)|^2>0$ for all $n\in\mathbb Z$. Our primary concern is the sign selection in the nondegenerate case $Q\not\equiv 0$, the offset $\beta$ is chosen so that
\[
|h(x_n)|^2>0\quad\hbox{and}\quad Q(x_n)>0,\qquad \forall n\in\mathbb Z.
\] Such a choice is feasible because the zeros of a nonzero bandlimited function, and the zeros of the nonzero real-analytic function $\operatorname{Im}K_1$, are discrete.

Evaluating the real part of \eqref{eq:ksign} at the grid points $x_n=\beta+n b_1$ yields
\begin{align}\label{eq:re}
	R_1(x_{n+1})R_1(x_n)-\bigl(\operatorname{Im}K_1(x_{n+1})\bigr)\bigl(\operatorname{Im}K_1(x_n)\bigr)
	=|h(x_{n+1})|^2R_2(x_n) .
\end{align}
Now write $\operatorname{Im}K_1(x_n)=\varepsilon_n S(x_n) $, where $\varepsilon_n\in\{-1,1\} $. Then from \eqref{eq:re}, we have
\begin{align}\label{eq:re1}
	\varepsilon_{n+1}\varepsilon_nS(x_{n+1})S(x_n)
	= R_1(x_{n+1})R_1(x_n)-|h(x_{n+1})|^2R_2(x_n).
\end{align}

Therefore, the third measurement determines whether the two adjacent imaginary
parts have the same sign or opposite signs. Once $\varepsilon_0$ is fixed (we set  $\varepsilon_0=1$ without loss of generality, as this only selects one of the two global conjugate solutions), all successive signs are obtained by the simple recurrence
\[
\varepsilon_{n+1} = \rho_n \varepsilon_{n}.
\]
Thus the local sign of $\operatorname{Im}K_1$ is resolved everywhere on the grid.

With the reconstructed correlation values \eqref{eq:con}, we have either $\widetilde K(x_n)=K_1(x_n)$ or $\widetilde K(x_n)=\overline{K_1(x_n)}$ (global conjugation). The recurrence \eqref{equ3-4} then recovers the samples $h_n$ up to a global phase and conjugation, and the Shannon formula yields the equivalence class $[h]$.

\subsection{STFT Phase Retrieval}
With the reconstruction methods for $\mathrm{PW}_B^p$ established above, we now incorporate them into the STFT framework.
\subsubsection{Two-Window STFT via the Square-Root Method}
We first consider the two-window scheme, in which the per-slice reconstruction is performed by the square-root method of Algorithm~\ref{alo1}. The complete procedure is summarized in Algorithm~\ref{alo2}.
\begin{algorithm}
	\caption{Two-Window STFT via the Square-Root Method.}\label{alo2}
	%\label{algorithmlabel}
	\begin{algorithmic}[1]
		\STATE Suppose $\phi, a, b$ satisfy the hypotheses of Proposition \ref{prop1} and $\left\{\omega_{n}=n\mathfrak{b}\right\}$, where $0<\mathfrak{b}\le 1/(4B)$;
		\STATE given $|\mathcal{V}_{\phi} f(ma,\omega_{n})|$, $|\mathcal{V}_{\psi} f(ma,\omega_{n})|$ for $m,n \in \mathbb{Z}$;
		\STATE for $m\in \mathbb{Z}$ is arbitrary but fixed, use Algorithm \ref{alo1} to reconstruct either $\lambda_{m} h_{m}$ or  $\lambda_{m} \overline{h_{m}}$ (and hence $[h_{m}]$) for our choice of uniform phase factor $\lambda_{m}$ and conjugation, denoted as $\widetilde{h_{m}}$;
		\STATE calculate
		\begin{align}\label{equ3-5}
			F_{m}(x)=\mathcal{F}\widetilde{h_{m}}(x),\quad m \in \mathbb{Z};
		\end{align}
		\STATE for $m$, calculate
		\begin{align}\label{equ3-6}
			f_m(x)=\frac{\phi(x)}{|\phi(x)|^2}\overline{F_{m}(x)},\quad x\in (-B,B);
		\end{align}
		\STATE for $m$, choose the phase and conjugate reflection of $f_m$ so that
		\begin{align*}
			f_{m} (x)=f_{m+1} (x-a),\quad x \in (-B+a,B),
		\end{align*}
		hence, reconstruct $\lambda f$ from the selected consistent phase factor and the conjugate reflection of $f_{m}$.
	\end{algorithmic}
\end{algorithm}

Suppose $\phi, a, b$ satisfy the hypotheses of Proposition \ref{prop1} and $\left\{\omega_{n}=n\mathfrak{b}\right\}$, where $0<\mathfrak{b}\le 1/(4B)$. Let $\psi$ be defined by
\begin{align}\label{equ3-7}
	\psi(x)=\phi(x)(e^{2 \pi ixb}-1).
\end{align}

The first strategy of Algorithm \ref{alo2} is to notice that
\begin{align*}
	h_{m}(\omega):=M_{ma}\mathcal{V}_{\phi} f(ma,\omega)=\mathcal{F}\left(f(\cdot+ma) \overline{\phi(\cdot)}\right)(\omega),
\end{align*}
which belongs to $\mathrm{PW}_B^p $. Moreover, $\widehat h_m\in L^1(\mathbb R)$, so the additional
assumption in Proposition~\ref{prop2.3} is automatically satisfied. A direct computation shows that
\begin{align}\label{equ3-8}
	|h_{m}(\omega+b)-h_{m}(\omega)|=|\mathcal{V}_{\psi} f(ma,\omega)|.
\end{align}
Therefore, from the STFT magnitude measurements
\begin{align*}
	\{|\mathcal{V}_{\phi} f(ma,\omega_{n})|, |\mathcal{V}_{\psi} f(ma,\omega_{n})|:~m,n\in\mathbb{Z}\},
\end{align*}
we obtain the estimate of $[h_{m}]$ for each $m$ via Algorithm \ref{alo1}, which is denoted by $\widetilde{h_{m}}$. In other words, there exists some $\lambda_{m}\in \mathbb{T}$ such that
\begin{align}\label{equ3-9}
	\widetilde{h_{m}}=\lambda_{m} h_{m}~\text{or}~\lambda_{m} \overline{h_{m}},\quad m\in\mathbb{Z}.
\end{align}

Substitute \eqref{equ3-9} into \eqref{equ3-5}, we obtain that
\begin{align*}
	F_{m}(x)=\lambda_{m}\mathcal{F}h_{m}(x)~\text{or}~
	\lambda_{m}\overline{\mathcal{F}h_{m}(-x)}
	,\quad \mbox{ a.e. } x\in\mathbb{R},~m\in\mathbb{Z}.
\end{align*}
Combined with \eqref{equ3-8}, we have
\begin{align*}
	F_{m}(x)=\lambda_{m}f(-x+ma)\phi(x)~\text{or}~\lambda_{m}\overline{f(x+ma)}\phi(x),\quad \mbox{ a.e. } x\in (-B,B),~m\in\mathbb{Z}.
\end{align*}
Then by \eqref{equ3-6}, it can be obtained that
\begin{align}\label{equ3-10}
	f(x+ma)=\lambda_{m}f_{m}(x)~\text{or}~\overline{\lambda_{m}f_{m}(-x)},\quad \mbox{ a.e. } x\in (-B,B),~m\in\mathbb{Z}.
\end{align}
To resolve these ambiguities globally, we exploit the overlap between consecutive time slices. Indeed, for $m$ and $m+1 $, we note that the domains of $f$ corresponding to $f_{m}$ and $f_{m+1}$ overlap on the interval $(-B+(m+1)a, B+ma)$. On this overlap, the two representations must agree, up to the possible phase and conjugation choices. By the above fact and \eqref{equ3-10}, it can be obtained that there may be two cases:
\\
Case 1: If
\begin{align*}
	f_{m}(x)=\overline{\lambda_{m}}\lambda_{m+1}f_{m+1}(x-a)~\text{or}~\overline{\lambda_{m}\lambda_{m+1}f_{m+1}(-(x-a))},\quad \mbox{ a.e. } x\in (-B+a,B)
\end{align*}
holds, we need to rectify the ambiguity of phase factor and choice of conjugate reflection of $f_{m+1}$ so that $f_{m}(x)=f_{m+1}(x-a)$ on $(-B+a,B)$.
\\
Case 2: If
\begin{align*}
	f_{m}(-x)=\overline{\lambda_{m}}\lambda_{m+1}f_{m+1}(-(x-a))~\text{or}~\overline{\lambda_{m}\lambda_{m+1}f_{m+1}(x-a)},\quad \mbox{ a.e. } x\in (-B+a,B)
\end{align*}
holds, we first rectify the reflection of $f_{m}$, then rectify the ambiguity of phase factor and choice of conjugate reflection for $f_{m+1}$ to ensure that $f_{m}(x)=f_{m+1}(x-a)$ holds on $(-B+a,B)$.

Therefore, it is noted that in step 6, the choice of phase factor can be arbitrarily initialized for $f_{0}$, and then propagate the corrections outward to $m>0$ and $m<0$ sequentially. This yields a globally consistent representation  $\lambda f$ for some $\lambda\in\mathbb T$, thus completing the reconstruction.

\subsubsection{Three-Window STFT via the K-Sign Method}
To improve robustness against noise, we replace the square-root step with the $K$-sign method and introduce a third window. And the overall STFT structure remains unchanged. The resulting algorithm is summarized in Algorithm~\ref{alo8}.

\begin{algorithm}
	\caption{Three-Window STFT via the $K$-Sign Method.}\label{alo8}
	%\label{algorithmlabel}
	\begin{algorithmic}[1]
		\STATE Suppose $\phi, a, b$ satisfy the hypotheses of Proposition \ref{prop1} and $\left\{\omega_{n}=n\mathfrak{b}\right\}$, where $0<\mathfrak{b}\le 1/(4B)$;
		\STATE given $|\mathcal{V}_{\phi} f(ma,\omega_{n})|$, $|\mathcal{V}_{\psi_k} f(ma,\omega_{n})|$ for $m,n \in \mathbb{Z}$ and $k=1,2$;
		\STATE for $m\in \mathbb{Z}$ being arbitrary but fixed, use $K$-sign Algorithm to reconstruct either $\lambda_{m} h_{m}$ or  $\lambda_{m} \overline{h_{m}}$ (and hence $[h_{m}]$) for our choice of uniform phase factor $\lambda_{m}$ and conjugation, denoted as $\widetilde{h_{m}}$;
		\STATE Subsequent steps are identical to Steps~4--6 of Algorithm~\ref{alo2}.
	\end{algorithmic}
\end{algorithm}

For each time slice $m$, define $h_m$ as in the two-window case. Using three windows $\phi,\psi_1,\psi_2$ with
\[
\psi_k(x)=\phi(x)\bigl(e^{2\pi i b_k x}-1\bigr),\qquad b_1=b,\ b_2=2b,
\]
we obtain the measurements
\[
\{|\mathcal V_\phi f(ma,\omega_n)|,~|\mathcal V_{\psi_k} f(ma,\omega_n)|:\ m,n\in\mathbb Z,\ k=1,2\}.
\]

For fixed $m$, the quantities $|h_m(\omega_n)|$ and $|h_m(\omega_n+b_k)-h_m(\omega_n)|$ can be extracted from these STFT magnitudes exactly as in the two-window case (cf. \eqref{equ3-8}). Applying Algorithm~\ref{alo7} to each $m$ yields a representative $\widetilde h_m$ of the equivalence class $[h_m]$. The subsequent phase and conjugation alignment across overlapping time slices is performed identically to Steps~4--6 of Algorithm~\ref{alo2}.
\begin{remark}
	
	As an extension of the preceding $K$-sign method, we note that its core procedure, namely the conjugate phase retrieval in $\mathrm{PW}_B^p $, can be replaced by the structured convolution reconstruction (SCR) algorithm of Lai, Littmann, and Weber~\cite{2021Conjugate}.
	
	The SCR algorithm of
Lai,   Littmann, and  Weber
\cite{2021Conjugate} recovers $f\in \mathrm{PW}_B^p$ from magnitudes of structured convolutions $|\vec{v}_l * f(\omega_n)|$, where $\{\vec{v}_l\}$ forms a matrix performing conjugate phase retrieval in a finite-dimensional space. For $\mathbb{C}^3 $, the matrix
	\begin{align}\label{equ5.2}
		V=\left[\begin{array}{rrrrrr}
			1 & 0 & 0 & 1 & 1 & 0 \\
			0 & 1 & 0 & -1 & 0 & 1 \\
			0 & 0 & 1 & 0 & -1 & -1
		\end{array}\right]
	\end{align}
	provides a concrete example. Embedding this method into the STFT framework with $b_0=0$,  $b_1=b $, $b_2=2b $, where $0<b\le 1/(2B)$, yields Algorithm~\ref{alo4}.
\end{remark}

\begin{algorithm}
	\caption{Three-Window STFT via SCR.}
	\label{alo4}
	\begin{algorithmic}[1]
		\STATE Suppose $\phi, a, b$ satisfy the hypotheses of Proposition \ref{prop1} and $\left\{\omega_{n}=n\mathfrak{b}\right\}$, where $0<\mathfrak{b}\le 1/(4B)$, set $b_0=0$, $b_1=b$, $b_2=2b$ and choose $V$ as in \eqref{equ5.2};
		\STATE given $|\mathcal{V}_{\phi} f(ma,\omega_{n})|$, $|\mathcal{V}_{\psi_k} f(ma,\omega_{n})|$ for $m,n \in \mathbb{Z}$ and $k=1,2$;
		\STATE for each fixed $m\in\mathbb Z $, apply the SCR algorithm of \cite{2021Conjugate} to obtain $\widetilde h_m$;
		\STATE Resolve per-slice ambiguities via overlap correction (Steps~4--6 of Algorithm~\ref{alo2}).
	\end{algorithmic}
\end{algorithm}

\section{Deterministic Stability on a Fixed Time Interval}
	
	We consider the stability analysis under a fixed time interval.
	Fix $m\in\mathbb Z$ and set
	\begin{align}\label{eq:wd}
		g(t):=f(t+ma)\overline{\phi(t)},
		\qquad
		h(x):=\mathcal Fg(x).
	\end{align}
	Since Algorithm~\ref{alo7} is formulated for prescribed STFT sampling sites, it must first use the
	Shannon formula to reconstruct the three continuous squared-magnitude
	functions and then select a suitable sequence $\beta+b\mathbb Z$.  In the
	stability analysis below, however,
	$\beta$ is fixed at the initial stage when the STFT measurements are given.
	More precisely, the observed data consist of a STFT sequence
	$\{ |\mathcal{V}_\phi f(ma,\beta+nb)| \}_n$,
	and the sequence $\{ \beta+nb \}_n$ is precisely the sequence $\{x_n\}$ defined in Step~6 of Algorithm~\ref{alo7}.
	The noise is therefore introduced directly into this fixed STFT sequence.

	We assume throughout this section that $g\in L^2(\mathbb R)$ and
	$g\ne0$.
	Then $\operatorname{supp}g\subset[-B,B]$ and
	$h\in\mathrm{PW}_B^2$.
	Fix $\beta\in[0,b)$.  Define
	\[
	x_n=\beta+nb,
	\quad h_n=h(x_n),
	\quad n\in\mathbb Z.
	\]
	In the notation of Algorithm~\ref{alo7}, the exact data on this sequence are
		\begin{align*}
		r_n&=|h_n|=|\mathcal{V}_\phi f(ma,x_n)|, \\
		d_n&=|h_{n+1}-h_n|=|\mathcal{V}_{\psi_1} f(ma,x_n)|, \\
		\ell_n&=|h_{n+2}-h_n|=|\mathcal{V}_{\psi_2} f(ma,x_n)|.
	\end{align*}
	The measured data are
	\[
	\tilde{r}_n=r_n+e_n^0, \quad
	\tilde{d}_n=d_n+e_n^1, \quad
	\tilde{\ell}_n=\ell_n+e_n^2,
	\]
	where all three measured magnitudes are nonnegative.
	Define
	\[
	K_n:=h_{n+1}\overline{h_n}.
	\]
	Then
	\[
	R_n=\operatorname{Re} K_n=\frac{r_{n+1}^2+r_n^2-d_n^2}{2},
	\qquad
	Q_n=|\operatorname{Im} K_n|^2=r_{n+1}^2r_n^2-R_n^2.
	\]
	Set $S_n=|\operatorname{Im} K_n|$. The third sequence is used only to determine the relative signs of consecutive imaginary parts.  Indeed, define
	\[
C_n:=\frac{r_{n+2}^2+r_n^2-\ell_n^2}{2}
=\operatorname{Re}(h_{n+2}\overline{h_n}).
\]
	and by \eqref{eq:re}, we have
	\begin{equation}\label{eq:stab-G}
	G_n:=R_{n+1}R_n-r_{n+1}^2C_n=\operatorname{Im} K_{n+1}\operatorname{Im} K_n.
    \end{equation}

	To avoid imposing a nondegeneracy condition on arbitrarily small tail
	samples, we restrict the branch analysis to a finite set containing most of
	the sampled energy and include its complement in the error.  Parseval gives
	\begin{align}\label{eq:Eall}
		E_{\rm all}:=\sum_{n\in\mathbb Z}r_n^2
		=\frac{\|g\|_{L^2}^2}{b}>0.
	\end{align}
	Choose a finite contiguous block of frequency samples. After translating the integer
	index, write it as
	\begin{align}\label{eq:EJ}
		J=\{0,1,\ldots,N\},
		\qquad
		E_J:=\sum_{n=0}^{N}r_n^2>0.
	\end{align}
	Its energy fraction is
	\begin{align}\label{eq:AJ}
		\alpha_J:=\frac{E_J}{E_{\rm all}}
		=\frac{\sum_{n\in J}|h_n|^2}
		{\sum_{n\in\mathbb Z}|h_n|^2}.
	\end{align}
	Thus, for example, $\alpha_J\geq0.9$ means that $J$ contains at least $90\%$ of the total sampled energy.

	Assume that the noise restricted to $J$ satisfies
	\begin{equation}\label{eq:stab-noise}
	\sum_{n=0}^{N}|e_n^0|^2
	+\sum_{n=0}^{N-1}|e_n^1|^2
	+\sum_{n=0}^{N-2}|e_n^2|^2
	\leq\delta^2E_J.
\end{equation}
	Thus, $\delta$ measures the combined
	$\ell^2$-noise on $J$ relative to the
	$\ell^2$-norm $E_J^{1/2}$ of the exact magnitude samples.
	Moreover, we assume the pointwise bound
\begin{equation}
	\max\left\{
	\max_{0\leq n\leq N}|e_n^0|,
	\max_{0\leq n\leq N-1}|e_n^1|,
	\max_{0\leq n\leq N-2}|e_n^2|
	\right\}
	\leq\eta\sqrt{E_J}.
	\label{eq:stab-noise-max}
\end{equation}
The parameter $\eta$ will be used only for the pointwise square-root and
branch decisions.

From the noisy data, set
\begin{align*}
	\widetilde R_n
	&=\frac{\widetilde r_{n+1}^2+\widetilde r_n^2
		-\widetilde d_n^2}{2},
	&
	\widetilde Q_n
	&=\widetilde r_{n+1}^2\widetilde r_n^2
	-\widetilde R_n^2,\\
	\widetilde C_n
	&=\frac{\widetilde r_{n+2}^2+\widetilde r_n^2
		-\widetilde\ell_n^2}{2},
	&
	\widetilde G_n
	&=\widetilde R_{n+1}\widetilde R_n
	-\widetilde r_{n+1}^2\widetilde C_n.
\end{align*}
Whenever $\widetilde Q_n>0$, define
\[
\widetilde S_n=\sqrt{\widetilde Q_n}.
\]
Condition \eqref{eq:stab-smallness} below guarantees that
$\widetilde Q_n>0$ for every $0\leq n\leq N-1$.
Choose $\widetilde\varepsilon_0\in\{-1,1\}$ arbitrarily. For
$0\leq j\leq N-2$, choose $\widetilde\varepsilon_{j+1}\in\{-1,1\}$ recursively by the rule from Algorithm~\ref{alo7}, i.e.,
\[
\widetilde\varepsilon_{j+1}
=\mathop{\rm argmin}_{\varepsilon\in\{-1,1\}}
\left|
\widetilde G_{j}
-\varepsilon\widetilde\varepsilon_{j}\widetilde S_{j+1}\widetilde S_{j}\right|.
\]
Set $\widetilde K_n=\widetilde R_n
+i\widetilde\varepsilon_n\widetilde S_n$ for $0\leq n\leq N-1$,
and
\[
\widetilde h_0=\widetilde r_0,
\qquad
\widetilde h_n=
\frac{\widetilde K_{n-1}}
{\overline{\widetilde h_{n-1}}},
\quad 1\leq n\leq N.
\]
For $\sigma\in\{-1,1\}$, write
\[
h_n^\sigma=
\begin{cases}
	h_n,&\sigma=1,\\
	\overline{h_n},&\sigma=-1.
\end{cases}
\]

	\begin{theorem}[Stability on the frequency block]
		\label{thm:stab-core}
		Assume that the noise restricted to $J$ satisfies \eqref{eq:stab-noise} and \eqref{eq:stab-noise-max}, and suppose that
		for some $\nu>0$,
		\begin{equation}
	S_n\geq\nu E_J,
	\qquad 0\leq n\leq N-1.
	\label{eq:stab-transversality}
\end{equation}
Assume also that the branches are correct on $J$, namely, for one
$\sigma\in\{-1,1\}$,
\begin{equation}
	\widetilde\varepsilon_n
	=\sigma\operatorname{sgn}(\operatorname{Im} K_n),
	\qquad 0\leq n\leq N-1.
	\label{eq:stab-branch}
\end{equation}
If
\begin{equation}
	A_\eta:=\left(2\sqrt2+\sqrt{1+\sqrt{1-4\nu^2}}\right)\eta
	+2\eta^2<\frac{1-\sqrt{1-4\nu^2}}{2},
	\label{eq:stab-smallness}
\end{equation}
then $\widetilde Q_n>0$ for every $0\leq n\leq N-1$, the above reconstruction
is well defined, and there exists some
$\lambda\in\mathbb T$ such that
\begin{equation}
	\frac{1}{E_J}\sum_{n=0}^{N}
	|\widetilde h_n-\lambda h_n^\sigma|^2
	\leq
	\delta^2\left(
	1+\frac{5(1+\delta)(1+\delta/4)^2}
	{\nu^2-A_\eta+A_\eta^2}
	\right).
	\label{eq:stab-core-bound}
\end{equation}

Moreover, with $\varepsilon_\eta=2\sqrt2\,\eta+\eta^2$, the branch
condition \eqref{eq:stab-branch} follows from the third measured sequence if
\begin{equation}
	\frac72\varepsilon_\eta
	+\frac{15}{4}\varepsilon_\eta^2<\nu^2.
	\label{eq:stab-zero-branch-condition}
\end{equation}
In this case, either choice of $\widetilde\varepsilon_0$ gives one of the two
global conjugate branches, and \eqref{eq:stab-core-bound} holds.
\end{theorem}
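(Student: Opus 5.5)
The plan is to normalize everything by $E_J$ and to work first with pointwise perturbation estimates, controlled by $\eta$, and only at the end with the $\ell^2$ estimate, controlled by $\delta$.

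\textbf{Lower bound on the magnitudes.} Since $r_n^2+r_{n+1}^2\le E_J$ and $S_n\le |K_n|=r_nr_{n+1}$, hypothesis \eqref{eq:stab-transversality} gives $r_n^2r_{n+1}^2\ge \nu^2E_J^2$. Solving the resulting quadratic gives
\[
\min\{r_n^2,r_{n+1}^2\}\ge \tfrac{1-\sqrt{1-4\nu^2}}{2}\,E_J
\quad\text{and}\quad
\max\{r_n^2,r_{n+1}^2\}\le \tfrac{1+\sqrt{1-4\nu^2}}{2}\,E_J .
\]
This explains the constants in \eqref{eq:stab-smallness}.

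\textbf{Pointwise perturbation of the derived quantities.} From \eqref{eq:stab-noise-max} we get $|\widetilde r_n^2-r_n^2|\le 2r_n|e_n^0|+|e_n^0|^2$. We then propagate this through $\widetilde R_n$, $\widetilde Q_n$ and $\widetilde C_n$. The aim is a bound of the form $|\widetilde S_n-S_n|\le A_\eta E_J$, or, equivalently for our purposes, $\widetilde Q_n\ge(\nu^2-A_\eta+A_\eta^2)E_J^2>0$. Here $A_\eta<\frac{1-\sqrt{1-4\nu^2}}{2}$ keeps $\widetilde r_n$ bounded away from $0$ and makes $\widetilde Q_n$ positive.

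\textbf{Branch selection.} For the second part, write $G_n=\pm S_{n+1}S_n$ with $|G_n|\ge \nu^2E_J^2$. Expanding $\widetilde G_n-G_n$ and $\widetilde S_{n+1}\widetilde S_n-S_{n+1}S_n$ in terms of the relative errors $\varepsilon_\eta=2\sqrt2\eta+\eta^2$ should bound the sum of the two discrepancies by $\bigl(\frac72\varepsilon_\eta+\frac{15}4\varepsilon_\eta^2\bigr)E_J^2$. Under \eqref{eq:stab-zero-branch-condition} this is smaller than the gap $|G_n|$, so the argmin returns $\operatorname{sgn}(G_n)$. Induction from $\widetilde\varepsilon_0$ then yields \eqref{eq:stab-branch}, with $\sigma=\widetilde\varepsilon_0\operatorname{sgn}(\operatorname{Im}K_0)$.

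\textbf{Well-posedness and reduction to phases.} Assume \eqref{eq:stab-branch}, and replace $h$ by $h^\sigma$ so that $\sigma=1$. Since $\widetilde R_n^2+\widetilde Q_n=\widetilde r_n^2\widetilde r_{n+1}^2$, induction shows $|\widetilde h_n|=\widetilde r_n$. Hence the recursion is well defined and involves no amplitude drift. Write $\widetilde h_n=\widetilde r_n e^{i\widetilde\theta_n}$ and $\lambda h_n=r_ne^{i\theta_n}$, with $\lambda$ chosen so that $\widetilde\theta_0=\theta_0$. Then
\[
|\widetilde h_n-\lambda h_n|^2\le |\widetilde r_n-r_n|^2+r_n|\widetilde r_n|\,|e^{i(\widetilde\theta_n-\theta_n)}-1|^2 .
\]
The first term summed over $J$ gives the leading $\delta^2$ in \eqref{eq:stab-core-bound}. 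The phase difference $\widetilde\theta_n-\theta_n$ is a telescoping sum of $\arg\widetilde K_j-\arg K_j$. Each angular error is bounded by $|\widetilde K_j-K_j|/|K_j|$, with denominators at least $\sqrt{\nu^2-A_\eta+A_\eta^2}\,E_J$, and the numerators are linear in $e_j^0,e_{j+1}^0,e_j^1$ with coefficients $\lesssim(1+\delta/4)\sqrt{E_J}$.

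\textbf{Main obstacle.} The hardest step is controlling the accumulation of these phase errors along the recursion so that the final constant $5(1+\delta)(1+\delta/4)^2/(\nu^2-A_\eta+A_\eta^2)$ does not grow with $N$. I would first try to choose $\lambda$ optimally, as an average phase rather than anchoring at $n=0$, and use the quadratic structure $|\widetilde K-K|^2$ together with \eqref{eq:stab-noise} and Cauchy--Schwarz, weighted by $r_n^2$, to sum the local errors. If the telescoping genuinely produces a factor of $N$, I would instead reorganize the estimate through the products $\widetilde h_{n+1}\overline{\widetilde h_n}$ versus $K_n$ directly. The factor $5$ would come from collecting the three noise sequences $e^0,e^1,e^2$, with $e^0$ appearing at two indices.
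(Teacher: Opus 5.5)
Your skeleton matches the paper's: an amplitude/phase split, telescoping phase increments, a centred choice of $\lambda$, and branches read off from $\operatorname{sgn}(\widetilde G_n)$. The gap is the step you flag as the main obstacle, which you leave open and which carries the real content.

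\textbf{Summing the phase errors.} Your per-step coefficients are bounded only pointwise by $\lesssim\sqrt{E_J}$. Summing the angle errors then gives an $\ell^1$ norm of the noise, which \eqref{eq:stab-noise} controls only up to a factor $\sqrt N$. Transversality does force $N\le 1/\nu$, since $N\nu E_J\le\sum_n r_nr_{n+1}\le E_J$. But this only gives a bound with an extra factor $N$, not \eqref{eq:stab-core-bound}. The claimed linear form of the numerator of $|\widetilde K_j-K_j|/|K_j|$ is also unjustified, because $\widetilde K_j-K_j$ contains $\widetilde S_j-S_j=(\widetilde Q_j-Q_j)/(\widetilde S_j+S_j)$.

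The missing idea is to interpolate the noise, $r_n(t)=r_n+te_n^0$ and $d_n(t)=d_n+te_n^1$, and differentiate $\theta_n(t)=\arccos\bigl(R_n(t)/(r_{n+1}(t)r_n(t))\bigr)$. Under the branch condition, the reconstructed and true phase increments are $\sigma\operatorname{sgn}(\operatorname{Im}K_n)\theta_n(1)$ and $\sigma\operatorname{sgn}(\operatorname{Im}K_n)\theta_n(0)$.
\begin{itemize}
\item The numerator of $\theta_n'(t)$ is $(r_n\cos\theta_n-r_{n+1})e_{n+1}^0+(r_{n+1}\cos\theta_n-r_n)e_n^0+d_ne_n^1$. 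By the law of cosines each coefficient is at most $d_n(t)$.
\item The denominator is $\sqrt{q_n^+(t)q_n^-(t)}$, i.e.\ the perturbed $S_n$.
\item The coefficient sequence is square-summable: $\sum_n d_n(t)^2\le(2+t\delta)^2E_J$, from $\sum_n|h_{n+1}-h_n|^2\le 4E_J$.
\item Cauchy--Schwarz against the $\ell^2$ noise then gives $\sum_n|\theta_n(1)-\theta_n(0)|\le\sqrt5\,\delta(2+\delta/2)/\sqrt{\nu^2-A_\eta+A_\eta^2}$, with no dependence on $N$.
\end{itemize}
Your anticipated factor $(1+\delta/4)$ is really $\frac12\int_0^1(2+t\delta)\,dt$, i.e.\ half the $\ell^2$ norm of this coefficient sequence, not a pointwise bound. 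Placing $\lambda$ at the midpoint of the range of the accumulated phases bounds every node's phase error by half the sum above. Then $\sum_n\widetilde r_nr_n\le(1+\delta)E_J$ finishes.

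\textbf{Positivity of $\widetilde Q_n$.} This step is also only stated as an aim, and your bounds on $r_n^2$ are not where the constants in \eqref{eq:stab-smallness} come from. The paper factors $Q_n=q_n^+q_n^-$ with $q_n^\pm=r_{n+1}r_n\pm R_n$.
\begin{itemize}
\item From $q_n^++q_n^-\le E_J$ and $q_n^+q_n^-\ge\nu^2E_J^2$ one gets $q_n^\pm\ge\frac{1-\sqrt{1-4\nu^2}}2E_J$.
\item Each factor moves by at most $A_\eta E_J$ along the whole path. The term $\sqrt{1+\sqrt{1-4\nu^2}}$ is the bound on $d_n/\sqrt{E_J}$, obtained from $d_n^2=(r_{n+1}+r_n)^2-2q_n^+$.
\item Hence $q_n^+(t)q_n^-(t)\ge(\nu^2-A_\eta+A_\eta^2)E_J^2$ for all $t\in[0,1]$. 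This is exactly the uniform denominator needed above.
\end{itemize}
Your alternative target $|\widetilde S_n-S_n|\le A_\eta E_J$ is not equivalent to this. It actually fails in general when $q_n^+$ and $q_n^-$ are unbalanced.

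\textbf{Branch selection.} You need not control $\widetilde S_{n+1}\widetilde S_n-S_{n+1}S_n$ at all. Since $\widetilde S_{n+1}\widetilde S_n>0$, the argmin simply returns $\operatorname{sgn}(\widetilde G_n)$, so $|\widetilde G_n-G_n|<|G_n|$ suffices. The paper's bound $\bigl(\frac72\varepsilon_\eta+\frac{15}4\varepsilon_\eta^2\bigr)E_J^2$ is for $|\widetilde G_n-G_n|$ alone, so your claim that it also covers the sum of both discrepancies is unsupported. You must also check that \eqref{eq:stab-zero-branch-condition} implies \eqref{eq:stab-smallness}, via $A_\eta\le\frac72\varepsilon_\eta$ and $\nu^2\le\frac{1-\sqrt{1-4\nu^2}}2$. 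That is what gives $\widetilde S_n>0$ and the final bound in that case.
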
	

\begin{proof}
We prove the conclusion in three steps.

(i)\, For $0\leq t\leq1$, let
\[
r_n(t)=r_n+te_n^0,
\qquad d_n(t)=d_n+te_n^1,
\]
and define
\[
R_n(t)=\frac{r_{n+1}^2(t)+r_n^2(t)-d_n^2(t)}{2},
\qquad
q_n^\pm(t)=r_{n+1}(t)r_n(t)\pm R_n(t).
\]
At $t=0$, by \eqref{eq:stab-transversality}, we have
\begin{equation}\label{eq:qn0}
	q_n^+(0)q_n^-(0)=Q_n=|S_n|^2\geq\nu^2E_J^2.
\end{equation}
Since $$q_n^+(0)+q_n^-(0)=2r_{n+1}r_n\leq r_{n+1}^2+r_{n}^2\leq E_J,$$
letting $x$ denote either of the two numbers $q_n^+(0)$ or $q_n^-(0)$,
we obtain
\begin{equation}\label{eq:qn1}
	\nu^2E_J^2\le q_n^+(0)q_n^-(0)\le x(E_J-x).
\end{equation}
Therefore
\begin{equation}\label{equ:qn0}
	q_n^\pm(0)\geq \frac{1-\sqrt{1-4\nu^2}}{2}E_J.
\end{equation}
Here the condition $0<\nu\le 1/2$ follows immediately from
\[
\nu E_J\le S_n\le |K_n|= r_{n+1}r_n\le \frac{E_J}{2}.
\]
A direct expansion of $q_n^+(t)$ yields
\begin{equation}\label{equ qn}
	q_n^+(t)-q_n^+(0)=t(r_{n+1}+r_n)(e_{n+1}^0+e_n^0)-td_ne_n^1+\frac{t^2}{2}\big((e_{n+1}^0+e_n^0)^2-(e_n^1)^2\big).
\end{equation}
From the elementary inequality
\[
(r_{n+1}+r_n)^2\le 2(r_{n+1}^2+r_n^2)\le 2E_J,
\]
we obtain
\begin{align*}
	d_n^2&=r_{n+1}^2+r_n^2-2R_n=(r_{n+1}+r_n)^2-2q_n^+(0)\\
	&\le 2E_J-(1-\sqrt{1-4\nu^2})E_J=(1+\sqrt{1-4\nu^2})E_J.
\end{align*}
Hence
\begin{align}
	|(r_{n+1}+r_n)(e_{n+1}^0+e_n^0)-d_ne_n^1|
	&\leq(r_{n+1}+r_n)(|e_{n+1}^0|+|e_n^0|)+d_n|e_n^1|\nonumber\\
	&\leq\left(2\sqrt2+\sqrt{1+\sqrt{1-4\nu^2}}\right)\eta E_J.\label{equ t 1}
\end{align}
Notice that
\begin{align}
	\frac12\left|(e_{n+1}^0+e_n^0)^2-(e_n^1)^2\right|
	&\leq \frac12\max\left\{(e_{n+1}^0+e_n^0)^2,(e_n^1)^2\right\}\nonumber\\
	&\leq 2\eta^2E_J.\label{equ noise}
\end{align}
Applying \eqref{equ t 1} and \eqref{equ noise} to the identity \eqref{equ qn}, we obtain
\begin{equation}
	\frac{\left|q_n^+(t)-q_n^+(0)\right|}{E_J}
	\leq
	\left(2\sqrt2+\sqrt{1+\sqrt{1-4\nu^2}}\right)t\eta
	+2t^2\eta^2.
	\label{eq:stab-qplus-perturb}
\end{equation}
Similarly, we can also obtain
\begin{equation}
	\frac{\left|q_n^-(t)-q_n^-(0)\right|}{E_J}
	\leq
	\left(2\sqrt2+\sqrt{1+\sqrt{1-4\nu^2}}\right)t\eta
	+2t^2\eta^2.
	\label{eq:stab-qminus-perturb}
\end{equation}
Set
\[
\mu(t):=\left(2\sqrt2+\sqrt{1+\sqrt{1-4\nu^2}}\right)t\eta
+2t^2\eta^2.
\]
Since $0\leq t\leq1$, we have $0\leq\mu(t)\leq A_\eta$. Applying \eqref{eq:stab-qplus-perturb} and \eqref{eq:stab-qminus-perturb} for the two signs,  we obtain the unified bound
\begin{equation}
	|q_n^\pm(t)-q_n^\pm(0)|\leq\mu(t)E_J\leq A_\eta E_J.
	\label{eq:stab-q-perturb}
\end{equation}
By \eqref{equ:qn0} and \eqref{eq:stab-q-perturb}, we deduce that
\[
q_n^\pm(t)\geq q_n^\pm(0)-|q_n^\pm(t)-q_n^\pm(0)|\geq
\left(\frac{1-\sqrt{1-4\nu^2}}{2}-\mu(t)\right)E_J.
\]
Combining this estimate with the smallness condition \eqref{eq:stab-smallness}, we deduce that the last quantity is strictly positive.
Moreover, combining \eqref{eq:qn0} and \eqref{eq:qn1} gives
\begin{align}
	q_n^+(t)q_n^-(t)
	&\geq\bigl(q_n^+(0)-\mu(t)E_J\bigr)
	\bigl(q_n^-(0)-\mu(t)E_J\bigr)\nonumber\\
	&=q_n^+(0)q_n^-(0)
	-\mu(t)E_J\bigl(q_n^+(0)+q_n^-(0)\bigr)
	+\mu^2(t)E_J^2\nonumber\\
	&\geq\bigl(\nu^2-\mu(t)+\mu^2(t)\bigr)E_J^2\nonumber\\
	&\geq\bigl(\nu^2-A_\eta+A_\eta^2\bigr)E_J^2.\label{eq:stab-I-path}
\end{align}
The penultimate step uses the fact that
\[
0\leq\mu(t)\leq A_\eta
<\frac{1-\sqrt{1-4\nu^2}}2\leq\frac12,
\]
while the strict positivity of the final lower bound follows from
\begin{align*}
	\nu^2-A_\eta+A_\eta^2
	&=\left(\frac{1-\sqrt{1-4\nu^2}}2-A_\eta\right)
	\left(\frac{1+\sqrt{1-4\nu^2}}2-A_\eta\right)>0.
\end{align*}
Thus for any $t\in[0,1]$, we have
\[
q_n^+(t)q_n^-(t)>0.
\]
Specially, for $t=1$,
\[
\widetilde Q_n= q_n^+(1)q_n^-(1)>0.
\]
Hence we proved if \eqref{eq:stab-smallness} holds, then $\widetilde Q_n>0$.

(ii)\, Since $q_n^\pm(t)=r_{n+1}(t)r_n(t)\pm R_n(t)>0$, we obtain
\[
-1<\frac{R_n(t)}{r_{n+1}(t)r_n(t)}<1.
\]
Define
\begin{align}\label{eq:def}
	\theta_n(t):= \arccos \left( \frac{R_n(t)}{r_{n+1}(t)r_n(t)} \right).
\end{align}
Notice that $r_{n+1}(t)r_n(t)$ and $R_n(t)$ are both quadratic polynomials in $t$, the function $\theta_n(t)$ is differentiable. Differentiating and simplifying yields
\begin{align}
	\theta_n'(t)&=\frac{\cos\theta_n(t)\bigl(r_{n+1}(t)r_n(t)\bigr)'-R_n'(t)}
	{ \sqrt{r_{n+1}^2(t)r_n^2(t)-R_n^2(t)} }\nonumber\\
	&=\frac{\cos\theta_n(t)\bigl(r_{n+1}(t)r_n(t)\bigr)'-R_n'(t)}
	{\sqrt{q_n^+(t)q_n^-(t)}}\label{eq:stab-theta-derivative}.
\end{align}
Expanding the numerator in \eqref{eq:stab-theta-derivative} yields
\[
\bigl(r_n(t)\cos\theta_n(t)-r_{n+1}(t)\bigr)e_{n+1}^0
+\bigl(r_{n+1}(t)\cos\theta_n(t)-r_n(t)\bigr)e_n^0
+d_n(t)e_n^1.
\]
From the definitions of $R_n(t)$ and $\theta_n(t)$, we immediately obtain
\[
d_n^2(t)=r_{n+1}^2(t)+r_n^2(t)-2r_{n+1}(t)r_n(t)\cos \theta_n(t).
\]
A direct computation yields
\[
d_n^2(t)-\big( r_n(t)\cos\theta_n(t)-r_{n+1}(t) \big)^2
=r_n^2(t)\sin^2\theta_n(t)>0.
\]
Therefore, we have
\[
|r_n(t)\cos\theta_n(t)-r_{n+1}(t)|\leq d_n(t),
\]
and similarly,
\[
|r_{n+1}(t)\cos\theta_n(t)-r_n(t)|\leq d_n(t).
\]
Consequently, the numerator in \eqref{eq:stab-theta-derivative} is bounded in absolute value by
\begin{equation}
	d_n(t)\bigl(|e_{n+1}^0|+|e_n^0|+|e_n^1|\bigr).
	\label{eq:stab-theta-numerator}
\end{equation}
Then by the inequality $\sum_{n=0}^{N-1} d_n^2=\sum_{n=0}^{N-1}|h_{n+1}-h_n|^2\leq4E_J$
and Cauchy-Schwarz inequality, we have
\begin{align}
	\sum_{n=0}^{N-1} d_n^2(t)
	&=\sum_{n=0}^{N-1} d_n^2 + 2t\sum_{n=0}^{N-1} d_ne_n^1+t^2\sum_{n=0}^{N-1} |e_n^1|^2\nonumber\\
	&\leq \sum_{n=0}^{N-1} d_n^2+2t\left(\sum_{n=0}^{N-1} d_n^2\right)^{1/2} \left(\sum_{n=0}^{N-1} |e_n^1|^2\right)^{1/2}
	+t^2\sum_{n=0}^{N-1} |e_n^1|^2\nonumber\\
	&\leq(2+t\delta)^2 E_J.\label{eq:stab-d-path}
\end{align}
Moreover, applying the Cauchy-Schwarz inequality once more, we obtain
\begin{align*}
	\left(\sum_{n=0}^{N-1}
	\bigl(|e_{n+1}^0|+|e_n^0|+|e_n^1|\bigr)^2\right)^{1/2}&\leq
	2\left(\sum_{n=0}^N|e_n^0|^2\right)^{1/2}
	+\left(\sum_{n=0}^{N-1}|e_n^1|^2\right)^{1/2}\\
	&\leq\sqrt5\,\delta\sqrt{E_J}.
\end{align*}
Combining this estimate with \eqref{eq:stab-theta-numerator} and
\eqref{eq:stab-d-path}, we have
\begin{align}
	\sum_{n=0}^{N-1}\left|\cos\theta_n(t)\big(r_{n+1}(t)r_n(t)\big)'-R_n'(t)\right|&\le \sum_{n=0}^{N-1}d_n(t)\bigl(|e_{n+1}^0|+|e_n^0|+|e_n^1|\bigr)\nonumber
	\\&\le\sqrt5\,(2+t\delta)\delta E_J\label{eq:stab-total}.
\end{align}
Using \eqref{eq:stab-I-path} and \eqref{eq:stab-total} to \eqref{eq:stab-theta-derivative}, we have
\begin{align*}
	\sum_{n=0}^{N-1}\left|\theta_n'(t)\right|\le \frac{\sqrt5\,(2+t\delta)\delta}{\sqrt{\nu^2-A_\eta+A_\eta^2}}.
\end{align*}
Integrating this inequality over $0\leq t\leq1$ yields
\begin{equation}
	\sum_{n=0}^{N-1}|\theta_n(1)-\theta_n(0)|
	\le
	\frac{\sqrt{5}\,\delta(\frac{\delta}{2}+2)}
	{\sqrt{\nu^2-A_\eta+A_\eta^2}}.
	\label{eq:stab-total-angle}
\end{equation}

Set $\chi_n=\operatorname{sgn}(\operatorname{Im} K_n)$ and
$\gamma_n=\theta_n(1)-\theta_n(0)$. Define
\[
u_n^\sigma=\frac{h_n^\sigma}{r_n},
\qquad
\widetilde u_n=\frac{\widetilde h_n}{\widetilde r_n}.
\]
Using these definitions, we first observe that
\begin{align*}
	h_{n+1}^\sigma \overline{h_n^\sigma}
	=R_n+i\sigma\chi_n S_n
	=r_{n+1}r_n\big( \cos\theta_n(0)+i\sigma\chi_n\sin\theta_n(0) \big)
	=r_{n+1}r_n e^{i\sigma\chi_n\theta_n(0)}.
\end{align*}
From the identity above, we obtain the phase recursion for the exact normalized quantities
\[
u_{n+1}^\sigma=u_{n}^\sigma e^{i\sigma\chi_n\theta_n(0)}.
\]
Similarly, we have
\[
\widetilde u_{n+1}=\widetilde u_{n}
e^{i\sigma\chi_n\theta_n(1)}.
\]
Choose $\lambda_0\in\mathbb T$ such that
$\widetilde u_0=\lambda_0u_0^\sigma$. Iterating the phase recursions then yields
\[
\frac{\widetilde u_n}{\lambda_0u_n^\sigma}
=\exp\left(i\sigma\sum_{j=0}^{n-1}\chi_j\gamma_j\right).
\]
To control the accumulated phase error, define $s_n=\sigma\sum_{j=0}^{n-1} \chi_j\gamma_j$ for $n\ge1$ with $s_0=0$, and let
$$s_{\max}=\max\limits_{0\le n\le N} s_n,\quad s_{\min}=\min\limits_{0\le n\le N} s_n.$$
Let $s_*=(s_{\max}+s_{\min})/2$ and set $\lambda=\lambda_0 e^{is_*}$.
Notice that
\[
|s_n-s_*|\leq \frac{s_{\max}-s_{\min}}{2}\le \frac{\sum_{j=0}^{N-1}|s_{j+1}-s_{j}|}{2}
= \frac{\sum_{j=0}^{N-1}|\gamma_j|}{2}, \quad  0\le n\le N.
\]
Using the elementary inequality $|e^{ix}-e^{iy}|\leq|x-y|$ and combining \eqref{eq:stab-total-angle}, we obtain
\begin{equation}
	|\widetilde u_n-\lambda u_n^\sigma|=|e^{is_n}-e^{is_*}|
	\leq |s_n-s_*| \leq
	\frac{\sqrt5\,\delta(1+\delta/4)}
	{\sqrt{\nu^2-A_\eta+A_\eta^2}},
	\qquad 0\leq n\leq N.
	\label{eq:stab-node-phase}
\end{equation}

For each $n$, we have the exact identity
\[
|\widetilde h_n-\lambda h_n^\sigma|^2
=|\widetilde r_n-r_n|^2
+\widetilde r_nr_n|\widetilde u_n-\lambda u_n^\sigma|^2,
\]
which decomposes the total error into amplitude and phase parts.
By \eqref{eq:stab-noise}, the amplitude part satisfies
\[
\sum_{n=0}^N|\widetilde r_n-r_n|^2\leq\delta^2E_J.
\]
For the phase part, we first bound the weight factor,
\begin{align*}
	\sum_{n=0}^N\widetilde r_nr_n&=\sum_{n=0}^N(r_n+e_n^0)r_n
	=E_J+\sum_{n=0}^Ne_n^0r_n\\
	&\leq E_J+
	\left(\sum_{n=0}^N|e_n^0|^2\right)^{1/2}
	\left(\sum_{n=0}^Nr_n^2\right)^{1/2}\\
	&\leq(1+\delta)E_J.
\end{align*}
Combining this with \eqref{eq:stab-node-phase}, we obtain
\begin{align*}
	\sum_{n=0}^N\widetilde r_nr_n|\widetilde u_n-\lambda u_n^\sigma|^2\leq(1+\delta)E_J\frac{5\delta^2(1+\delta/4)^2}
	{\nu^2-A_\eta+A_\eta^2}.
\end{align*}
Adding the amplitude and phase contributions yields
\begin{align*}
	\sum_{n=0}^{N}
	|\widetilde h_n-\lambda h_n^\sigma|^2
	\leq
	\delta^2\left(
	1+\frac{5(1+\delta)(1+\delta/4)^2}
	{\nu^2-A_\eta+A_\eta^2}
	\right)E_J,
\end{align*}
which proves \eqref{eq:stab-core-bound}.

(iii)\, It remains to verify the sufficient branch condition.  Set
$\varepsilon_\eta=2\sqrt2\,\eta+\eta^2$ as in the theorem. First note that
$r_n \le \sqrt{E_J}$ for every $n\in J$ from the definition of $E_J$.
Also, since $d_n=|h_{n+1}-h_n|\le r_{n+1}+r_n$, the Cauchy-Schwartz inequality gives
\[
d_n^2 \le (r_{n+1}+r_n)^2 \le 2(r_{n+1}^2+r_n^2) \le 2E_J,
\]
so $d_n \le \sqrt{2E_J}$. The same argument yields $\ell_n \le \sqrt{2E_J}$.
Using these bounds together with \eqref{eq:stab-noise-max}, we now control the squared perturbations,
\[
|\widetilde r_n^2-r_n^2| =|2r_ne_n^0+(e_n^0)^2| \le 2|r_n||e_n^0|+|e_n^0|^2\le 2\sqrt{E_J}\cdot\eta\sqrt{E_J} + \eta^2E_J = (2\eta+\eta^2)E_J\le \varepsilon_\eta E_J.
\]
Similarly, we have
\[
|\widetilde d_n^2-d_n^2|,\quad |\widetilde \ell_n^2-\ell_n^2|
\le\varepsilon_\eta E_J.
\]
Consequently, from the preceding squared perturbation bounds we obtain
\[
|\widetilde R_n-R_n|,
\quad |\widetilde C_n-C_n|
\leq\frac32\varepsilon_\eta E_J.
\]
Meanwhile, we note that
\[
|R_n|\le \frac{r_{n+1}^2+r_n^2}{2}\le \frac{E_J}{2},
\qquad
|C_n|\le \frac{r_{n+2}^2+r_n^2}{2}\le \frac{E_J}{2},
\]
then we have
\begin{align*}
	&\quad|\widetilde R_{n+1}\widetilde R_n-R_{n+1}R_n|\\&= |(\widetilde R_{n+1}-R_{n+1})(\widetilde R_n-R_n) + R_{n+1}(\widetilde R_n-R_n)
	+ R_n(\widetilde R_{n+1}-R_{n+1})|\\
	&\leq
	|\widetilde R_{n+1}-R_{n+1}|
	|\widetilde R_n-R_n|+|R_{n+1}||\widetilde R_n-R_n|
	+|R_n||\widetilde R_{n+1}-R_{n+1}|
	\\
	&\leq
	\left(\frac32\varepsilon_\eta
	+\frac94\varepsilon_\eta^2\right)E_J^2.
\end{align*}
Similarly,
\begin{align*}
	|\widetilde r_{n+1}^{2}\widetilde C_n-r_{n+1}^2C_n|\leq
	\left(2\varepsilon_\eta
	+\frac32\varepsilon_\eta^2\right)E_J^2.
\end{align*}
Then combining the definition of $G_n$, these two estimates give
\begin{equation}
	|\widetilde G_n-G_n|\le|\widetilde R_{n+1}\widetilde R_n-R_{n+1}R_n|+|\widetilde r_{n+1}^{2}\widetilde C_n-r_{n+1}^2C_n|
	\leq\left(\frac72\varepsilon_\eta
	+\frac{15}{4}\varepsilon_\eta^2\right)E_J^2.
	\label{eq:stab-G-error}
\end{equation}
On the other hand, \eqref{eq:stab-G} and
\eqref{eq:stab-transversality} give
\[
|G_n|=|\operatorname{Im} K_{n+1}\operatorname{Im} K_n|=S_{n+1}S_n\geq\nu^2E_J^2.
\]
Thus \eqref{eq:stab-zero-branch-condition} and \eqref{eq:stab-G-error}
imply
\[
|\widetilde G_n-G_n|< |G_n|.
\]
Consequently,
\[
\operatorname{sgn}(\widetilde G_n)=\operatorname{sgn}(G_n)
=\operatorname{sgn}(\operatorname{Im} K_{n+1}\operatorname{Im} K_n),
\qquad 0\leq n\leq N-2.
\]
Moreover, since
\[
\sqrt{1+\sqrt{1-4\nu^2}}\leq\sqrt2,
\]
we have
\[
A_\eta\leq3\sqrt2\,\eta+2\eta^2
\leq\frac72\varepsilon_\eta.
\]
Also, the inequality
\[
\nu^2\leq\frac{1-\sqrt{1-4\nu^2}}2
\]
holds for all $0<\nu\le1/2$. Hence \eqref{eq:stab-zero-branch-condition} implies
\eqref{eq:stab-smallness}. In particular, we have
$\widetilde S_{n+1}\widetilde S_n>0$. Therefore, in Step~7 of Algorithm~\ref{alo7} the minimizer is determined solely by the sign of $\widetilde G_n$.

Since we have already established $\operatorname{sgn}(\widetilde G_n)=\operatorname{sgn}(\operatorname{Im} K_{n+1}\operatorname{Im} K_n)$, the recursion propagates the sign correctly: starting from either choice of  $\widetilde\varepsilon_0$, we obtain either $\widetilde\varepsilon_n=\operatorname{sgn}(\operatorname{Im} K_n)$ for all  $n$, or its global negative. Thus the branch condition \eqref{eq:stab-branch} holds.
\end{proof}
	
	\begin{theorem}[Stability on a fixed time interval]
		\label{thm:stab-full-gm}
		Let $0<b\leq1/(2B)$, and let $J$, $E_{\rm all}$, $E_J$ and
		$\alpha_J$ be defined as in \eqref{eq:Eall}--\eqref{eq:AJ}. Suppose that the noise conditions
		\eqref{eq:stab-noise} and \eqref{eq:stab-noise-max} hold on $J$,
		and that the transversality condition \eqref{eq:stab-transversality}, the branch condition
		\eqref{eq:stab-branch}, and the smallness condition
		\eqref{eq:stab-smallness} also hold. Outside $J$, impose no condition on the
		reconstructed phases: let $\widetilde h_n$ be arbitrary subject only to
		\[
		|\widetilde h_n|=\widetilde r_n,
		\qquad n\notin J.
		\]
		Additionally, assume that the magnitude noise outside $J$ satisfies
		\begin{equation}
			\sum_{n\notin J}|e_n^0|^2
			\leq\delta_{\rm out}^2E_{\rm all}.
			\label{eq:stab-outside-noise}
		\end{equation}
		Define $I_b=\left[-1/(2b),1/(2b)\right]$ and for $\sigma=\pm1$, set
		\[
		g^\sigma(t)=
		\begin{cases}
			g(t),&\sigma=1,\\
			\overline{g(-t)},&\sigma=-1.
		\end{cases}
		\]
		Also define $\widetilde g$ by its Fourier expansion on $I_b$,
		\[
		\widetilde g(t)=
		\begin{cases}
			b\displaystyle\sum_{n\in\mathbb Z}
			\widetilde h_ne^{2\pi i(\beta+nb)t},&t\in I_b,\\[1ex]
			0,&t\notin I_b,
		\end{cases}
		\]
		with convergence in $L^2(I_b)$.
		Then the following error bound holds:
		\begin{align}
			\frac{
				\displaystyle\min_{\substack{\lambda\in\mathbb T\\
						\sigma\in\{-1,1\}}}
				\|\widetilde g-\lambda g^\sigma\|_{L^2(\mathbb R)}^2}
			{\|g\|_{L^2(\mathbb R)}^2}\quad\leq
			\alpha_J\delta^2\left(
			1+\frac{5(1+\delta)(1+\delta/4)^2}
			{\nu^2-A_\eta+A_\eta^2}
			\right)
			+\left(2\sqrt{1-\alpha_J}+\delta_{\rm out}\right)^2.
			\label{eq:stab-gm-bound}
		\end{align}
	\end{theorem}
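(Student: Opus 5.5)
The plan is to move the problem from $L^2(\mathbb R)$ to $\ell^2(\mathbb Z)$ by Parseval on the interval $I_b$, and then to split the sequence error into the block $J$ and its complement.

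\textbf{Reduction to sequences.} Since $\operatorname{supp} g\subset[-B,B]\subset I_b$ (because $b\le 1/(2B)$), the exponentials $\{\sqrt b\,e^{2\pi i(\beta+nb)t}\}_{n\in\mathbb Z}$ form an orthonormal basis of $L^2(I_b)$. We have $h_n=\mathcal Fg(x_n)=\int g(t)e^{-2\pi i x_n t}\,dt$. The Fourier inversion on $I_b$ then involves coefficients paired with $e^{+2\pi i x_n t}$ in the paper's convention. I will take $g(t)=b\sum_n h_n e^{2\pi i(\beta+nb)t}$ on $I_b$ as the intended expansion, matching the definition of $\widetilde g$; otherwise one replaces $t$ by $-t$ throughout, which does not affect norms. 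This gives $\|g\|_{L^2}^2=b\sum_n|h_n|^2=bE_{\rm all}$, consistent with \eqref{eq:Eall}.

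For $\sigma=-1$, the function $\overline{g(-t)}$ has coefficient sequence $\overline{h_n}=h_n^{-1}$. Hence $\|\widetilde g-\lambda g^\sigma\|_{L^2}^2=b\sum_{n}|\widetilde h_n-\lambda h_n^\sigma|^2$. Dividing by $\|g\|^2=bE_{\rm all}$, the left side of \eqref{eq:stab-gm-bound} is bounded by $E_{\rm all}^{-1}\sum_n|\widetilde h_n-\lambda h_n^\sigma|^2$. Here $\lambda,\sigma$ are the choices supplied by Theorem~\ref{thm:stab-core}.

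\textbf{Splitting.} Write $\sum_n=\sum_{n\in J}+\sum_{n\notin J}$ and use the triangle inequality in $\ell^2$ on the two pieces.

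On $J$, Theorem~\ref{thm:stab-core} applies directly, since its hypotheses are exactly those assumed here. Multiplying \eqref{eq:stab-core-bound} by $E_J/E_{\rm all}=\alpha_J$ gives the first term of \eqref{eq:stab-gm-bound}.

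Outside $J$, I bound pointwise using only the moduli:
\[|\widetilde h_n-\lambda h_n^\sigma|\le |\widetilde h_n|+|h_n|=\widetilde r_n+r_n\le 2r_n+|e_n^0|.\]
By Minkowski,
\[\Bigl(\sum_{n\notin J}|\widetilde h_n-\lambda h_n^\sigma|^2\Bigr)^{1/2}\le 2\Bigl(\sum_{n\notin J}r_n^2\Bigr)^{1/2}+\delta_{\rm out}\sqrt{E_{\rm all}}.\]
Here $\sum_{n\notin J}r_n^2=(1-\alpha_J)E_{\rm all}$, and the noise term comes from \eqref{eq:stab-outside-noise}. Dividing by $E_{\rm all}$ gives the tail contribution $(2\sqrt{1-\alpha_J}+\delta_{\rm out})^2$.

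\textbf{Combining and the main obstacle.} The squared total norm equals the sum of the squared norms over $J$ and over $J^c$, because the index sets are disjoint. No cross term appears, so the two contributions simply add, which yields \eqref{eq:stab-gm-bound}.

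The only delicate point is the bookkeeping in the first step. One must check that the Fourier convention makes $\widetilde g$ and $g^\sigma$ share the same basis expansion, including the fixed offset $\beta$. One must also check that conjugation together with the reflection $t\mapsto -t$ corresponds exactly to conjugating the coefficients, i.e.\ that $e^{2\pi i x_n t}$ maps correctly under $t\mapsto-t$ plus conjugation. Once this is in place, everything else is Parseval plus the triangle inequality.
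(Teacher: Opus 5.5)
Your proposal is correct and follows the paper's proof essentially step for step. Both arguments use Parseval on $I_b$ with the orthonormal basis $\{\sqrt b\,e^{2\pi i(\beta+nb)t}\}$, apply Theorem~\ref{thm:stab-core} on $J$ rescaled by $\alpha_J=E_J/E_{\rm all}$, bound the complement of $J$ by Minkowski using only $\widetilde r_n+r_n\le 2r_n+|e_n^0|$, and add the two squared contributions over the disjoint index sets. Your hedge about the Fourier convention is unnecessary: with $\mathcal Fg(x)=\int g(t)e^{-2\pi i xt}\,dt$, the coefficient of $g$ against $\sqrt b\,e^{2\pi i x_nt}$ is exactly $\sqrt b\,h_n$, so $g=b\sum_n h_ne^{2\pi i x_nt}$ on $I_b$ holds as written.
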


	\begin{proof}
		By Theorem~\ref{thm:stab-core}, there exist some
		$\lambda\in\mathbb T$ and $\sigma\in\{-1,1\}$ such that
		\begin{align}
			\sum_{n\in J}|\widetilde h_n-\lambda h_n^\sigma|^2
			&\leq
			\delta^2E_J\left(
			1+\frac{5(1+\delta)(1+\delta/4)^2}
			{\nu^2-A_\eta+A_\eta^2}
			\right)\nonumber\\
			&=\alpha_J\delta^2\left(
			1+\frac{5(1+\delta)(1+\delta/4)^2}
			{\nu^2-A_\eta+A_\eta^2}
			\right)E_{\rm all}.
			\label{eq:stab-gm-core-part}
		\end{align}
		For the same $\lambda$ and $\sigma$, no phase information is used outside
		$J$. Since $|h_n^\sigma|=r_n$ and
		$|\widetilde h_n|=\widetilde r_n$, applying Cauchy-Schwarz inequality we have
		\begin{align}
			\left(\sum_{n\notin J}
			|\widetilde h_n-\lambda h_n^\sigma|^2\right)^{1/2}
			&\leq
			\left(\sum_{n\notin J}\widetilde r_n^2\right)^{1/2}
			+\left(\sum_{n\notin J}r_n^2\right)^{1/2}\nonumber\\
			&\leq
			2\left(\sum_{n\notin J}r_n^2\right)^{1/2}+\left(\sum_{n\notin J}|e_n^0|^2\right)^{1/2}\nonumber\\
			&\leq
			2\left(E_{\rm all}-E_J\right)^{1/2}
			+\left(\sum_{n\notin J}|e_n^0|^2\right)^{1/2}\nonumber\\
			&\leq
			\left(2\sqrt{1-\alpha_J}+\delta_{\rm out}\right)
			\sqrt{E_{\rm all}}. \label{eq:stab-gm-outside-part}
		\end{align}
		Since $\operatorname{supp}g\subset[-B,B]\subset I_b$, the function
		$g^\sigma$ can be represented on $I_b$ by its Fourier series
		\[
		g^\sigma(t)=b\sum_{n\in\mathbb Z}
		h_n^\sigma e^{2\pi i(\beta+nb)t},
		\qquad t\in I_b.
		\]
		The system
		$\{\sqrt b\,e^{2\pi i(\beta+nb)t}\}_{n\in\mathbb Z}$ forms an
		orthonormal basis of $L^2(I_b)$. Therefore, by Parseval's identity,
		\begin{equation}
			\|\widetilde g-\lambda g^\sigma\|_{L^2(\mathbb R)}^2
			=b\sum_{n\in\mathbb Z}
			|\widetilde h_n-\lambda h_n^\sigma|^2,
			\qquad
			\|g\|_{L^2(\mathbb R)}^2=bE_{\rm all}.
			\label{eq:stab-gm-parseval}
		\end{equation}
		Substituting the bounds from \eqref{eq:stab-gm-core-part} and \eqref{eq:stab-gm-outside-part} into \eqref{eq:stab-gm-parseval} yields \eqref{eq:stab-gm-bound}.
	\end{proof}

	\section{Numerical Experiments}
	In this section, we design some numerical experiments to validate the three retrieval algorithms described in Sections~3--4. Let's first introduce some geometric and numerical parameters.
	
	For a fixed time shift $m$, we define the windowed local function $g_m$ and its Fourier transform as \eqref{eq:wd}. The marks in this section are consistent with those in section 4.
	
	\subsection{Implementation of Local Recovery Algorithms}
	For different algorithms with fixed m, we introduce their numerical simulation process respectively.
	
	\subsubsection*{Two-window method.}
	This method begins with truncated Shannon interpolation of the two square magnitudes $$\{|\mathcal{V}_{\phi} f(ma,n\mathfrak{b})|^2,|\mathcal{V}_{\psi} f(ma,n\mathfrak{b})|^2:n\in[N_1,N_2]\},$$ where $\psi(\cdot)=\phi(x)(e^{2 \pi ib\cdot }-1) $. Then at points $x_n=\beta+nb$ with $\beta$ chosen from a discrete set of candidates (e.g., equally spaced in $[0,b)$), we obtain $r_n$ and $d_n$. Then we compute $R_n=\operatorname{Re} K_n$ via \eqref{equ3-1}.
	
	To select the optimal offset $\beta$ for each time slice, we define a score
	\[
	S_{\text{2W}}(\beta) = \frac{\min_n r_n^2}{\operatorname{median}_n r_n^2+ \varsigma}
	- 2 \cdot \frac{\|\max\bigl( |R_n| - |K_n|,\; 0 \bigr)\|_2}{\max\bigl( \|K_n\|_2,\; \varsigma \bigr)}
	\]
	where  $\varsigma>0$ is a small constant to avoid division by zero and $\|\cdot\|_2$ denotes the $\ell_2$-norm. The first term discourages lattices with atypically small squared amplitudes, while the second penalizes breaches of $|R_n| \le |K_n|$. The offset $\beta$ that maximises $S_{\text{2W}}(\beta)$ is selected independently per-slice.
	
	Once the optimal offset $\beta$ is fixed, we compute $Q_n=(\operatorname{Im} K_n)^2$ via \eqref{equ3-2}, where $R_n$ is projected onto $[-|K_n|,|K_n|]=[-|r_{n+1}r_n|,|r_{n+1}r_n|]$ to enforce $Q_n\ge0$. To determine the signs of the square roots of $ Q_n $,
	we introduce a refined grid over the interval spanned by the selected lattice points. More precisely, let  $x_0=x_{\min}$ and  $x_N=x_{\max}$ be the endpoints of the selected lattice segment, and define the refined grid
	\[
	y_j=x_0+\frac{b}{N_{\mathrm{ref}}} j,\qquad j=0,1,\dots,NN_{\mathrm{ref}},
	\]
	where  $N_{\mathrm{ref}}\in\mathbb{N}$ is a prescribed refinement factor.
	
	On this fine grid we interpolate the squared magnitudes and compute $Q_j $, then apply a spatial numerical continuation strategy to select the signed square root $I_j=\pm\sqrt{Q_j} $. The procedure is initialised at the node where $Q_j$ attains its global maximum; at this anchor point and at both of its immediate neighbours, we assign the positive branch, i.e.  $I=+\sqrt{Q}$. For rightward propagation, a second-order linear predictor is constructed from the two previously established solutions:
	\[
	I_k^{\mathrm{pred}} = 2I_{k-1} - I_{k-2}.
	\]
	The signed solution at the current node $k$ is then selected from the two candidates $\pm\sqrt{Q_k}$ by minimising the absolute deviation from this prediction:
	\[
	I_k = \arg \min_{s\in\{\pm\sqrt{Q_k}\}} \bigl|s - I_k^{\mathrm{pred}}\bigr|.
	\]
	Leftward propagation is handled symmetrically using the corresponding backward extrapolation.
	
	Since  $x_n=y_{nN_{\mathrm{ref}}} $, the selected lattice is contained in the fine grid. Once the full signed sequence  $I_j$ has been obtained, we retain its branch signs on the selected lattice and set
	\[
	\varepsilon_n
	=
	\operatorname{sgn}\bigl(I_{nN_{\mathrm{ref}}}\bigr),
	\qquad
	I_n=\varepsilon_n\sqrt{Q_n},
	\]
	where  $\operatorname{sgn}(0)=0 $. The complex spectral values  $h_n$ are then recovered via $K_n=R_n+iI_n$ and the recursion \eqref{equ3-3}--\eqref{equ3-4}, starting from the node with largest $r_n $.
	
	Since $R_n$ is projected onto $[-|K_n|,|K_n|]$ before $Q_n$ is formed, the identity  $|\widetilde K_n|=r_{n+1}r_n$ holds by construction. Nevertheless, to avoid floating-point magnitude drift along the chain and ill-conditioning when $|\widetilde h_{n-1}|$ is small, we introduce the following magnitude-preserving form of the bidirectional recursion: $\widetilde K_n$ supplies only the phase increment, while the measured magnitude $r_n$ fixes the recovered magnitude,
	\begin{equation}\label{eq:magnitude-preserving-forward}
		\widetilde h_n=r_n\exp\left(
		i\arg\frac{\widetilde K_{n-1}}
		{\overline{\widetilde h_{n-1}}}
		\right),
	\end{equation}
	with the backward update performed analogously. This coincides with \eqref{equ3-3}--\eqref{equ3-4} in exact arithmetic.

	\subsubsection*{Three-window $K$-sign method.}
	This method begins with truncated Shannon interpolation of the three squared magnitudes
	\[
	\{|\mathcal{V}_{\phi} f(ma,n\mathfrak{b})|^2,\ |\mathcal{V}_{\psi_1} f(ma,n\mathfrak{b})|^2,\ |\mathcal{V}_{\psi_2} f(ma,n\mathfrak{b})|^2 : n\in[N_1,N_2]\},
	\]
	where $\psi_k(\cdot)=\phi(x)(e^{2 \pi ikb\cdot }-1)$. Interpolation to the same candidate lattices $x_n=\beta+nb$ yields $r_n, d_n, \ell_n$
	Let
	\[
	\zeta = \max\{\max_n r_n,\ \max_n d_n,\ \max_n \ell_n,\ \varsigma\}
	\]
	be a global scale factor, where  $\varsigma>0$ is a small constant to ensure a positive lower bound.
	
	Let $\theta_n$ denote the phase difference between adjacent points, defined as
	\[
	\theta_n=\arccos\!\left(\frac{r_{n+1}^2+r_n^2-d_n^2}{2r_{n+1}r_n}\right),
	\]
	which is the noiseless value of  $\theta_n(0)$ in \eqref{eq:def}.
	
	For each candidate $\beta $, we define a pre-reconstruction score that measures how well the measured third chain $\ell$ can be predicted from the first two chains under the two possible sign transitions. For each triple  $(r_{n-1}, r_n, r_{n+1})$, the two predictions of  $\ell_{n+1}$ corresponding to same-sign ($\rho=+1 $) and opposite-sign ($\rho=-1 $) transitions are
	\[
	\ell_{n+1}^{\mathrm{same}}
	=\sqrt{r_{n-1}^2 + r_{n+1}^2 - 2r_{n-1}r_{n+1}\cos(\theta_{n-1}+\theta_{n})},
	\]
	\[
	\ell_{n+1}^{\mathrm{opp}}
	=\sqrt{r_{n-1}^2 + r_{n+1}^2 - 2r_{n-1}r_{n+1}\cos(\theta_{n-1}-\theta_{n})}.
	\]
	The score for the candidate $\beta$ is defined as
	\[
	S_{\mathrm{KS}}(\beta)=\frac{1}{M}\sum_{n=1}^{M} w_n\cdot
	\left|
	\frac{|\ell_{n+1}-\ell_{n+1}^{\mathrm{same}}|}
	{\ell_{n+1}+\ell_{n+1}^{\mathrm{same}}+\varrho}
	-
	\frac{|\ell_{n+1}-\ell_{n+1}^{\mathrm{opp}}|}
	{\ell_{n+1}+\ell_{n+1}^{\mathrm{opp}}+\varrho}
	\right|,
	\]
	where $M$ is the number of valid triples, $\varrho=10^{-12}\zeta$ and
	\[
	w_n=\min\!\left(1,\frac{r_{n-1}^2+r_n^2+r_{n+1}^2}{3\zeta^2}\right)
	\frac{|\ell_{n+1}^{\text{same}} - \ell_{n+1}^{\text{opp}}|}
	{\ell_{n+1}^{\text{same}} + \ell_{n+1}^{\text{opp}} + \varrho}.
	\]
	The offset $\beta$ that maximises $S_{\mathrm{KS}}(\beta)$ is selected independently per-slice.
	
	On the selected lattice, we compute $R_n$, $Q_n$ and $C_n$ and set
	$S_n = \sqrt{\max (Q_n,\;0)}$. The relative sign $\rho_n=\varepsilon_n\varepsilon_{n+1}$ is determined pointwise by comparing
	\begin{align*}
		E_n^{\mathrm{same}}=\left|G_n-S_n S_{n+1}\right|,\quad
		E_n^{\mathrm{opp}}=\left|G_n+S_n S_{n+1}\right|,
	\end{align*}
	We set $\rho_n=+1$ if $E_n^{\mathrm{same}}\le E_n^{\mathrm{opp}} $, otherwise $\rho_n=-1 $. The signs $\varepsilon_n$ are propagated from the edge where $S_n$ is maximal, anchored at $\varepsilon_q=+1 $; if all $S_n=0 $, we set all $\varepsilon_n=0 $. This yields the full sequence
	\[
	\widetilde K_n=R_n+i\,\varepsilon_n S_n.
	\]
	Since $S_n$ is obtained by clipping $Q_n$ to enforce $Q_n\ge0 $, the identity $|\widetilde K_n|=r_n r_{n+1}$ may fail. We therefore apply the same magnitude-preserving bidirectional phase recursion \eqref{eq:magnitude-preserving-forward} introduced for the two-window method, with $\widetilde K_n$ supplying only the phase increment and $r_n$ fixing the recovered magnitude.
	
	\subsubsection*{Three-window SCR method.}
	This method also begins with truncated Shannon interpolation of the same three squared magnitudes as in the $K$-sign method, evaluated on each candidate lattice $x_n=\beta+nb$, to obtain the chains $r_n, d_n, \ell_n $. For each candidate $\beta $, we form the six-dimensional magnitude vector
	\[
	\mathbf{y}_n = \bigl(r_n,\; r_{n+1},\; r_{n+2},\; d_n,\; \ell_{n},\; d_{n+1}\bigr)^{\mathsf{T}},
	\]
	which is the SCR column of Algorithm~5. Then we compute $R_n$, $Q_n$, $C_n$ and
	\[
	\Delta_n=r_{n+2}^2 r_{n}^2-C_{n}^2.
	\]
	
	To select the optimal offset $\beta$, we define
	\[
	S_{\mathrm{SCR}}(\beta)=\frac{1}{M}\sum_{n=1}^{M}
	\frac{\sqrt{\max(Q_{n+1},0)+\max(\Delta_{n},0)+\max(Q_{n},0)}}
	{1+\nu_n},
	\]
	where $M$ is the number of valid triples and \[
	\nu_n=
	\frac{\sqrt{\max(-Q_{n+1},0)+\max(-\Delta_n,0)+\max(-Q_n,0)}}
	{r_{n+1}^2+r_n^2+r_{n+2}^2+\varsigma}.
	\]
	The offset $\beta$ that maximises $S_{\mathrm{SCR}}(\beta)$ is selected independently per-slice.
	
	We then apply the Gerchberg-Saxton alternating projection algorithm independently to each $\mathbf{y}_n$ to obtain complex estimates of the triple $(h_n, h_{n+1}, h_{n+2})$. These estimates are inherently ambiguous up to an unknown global phase and conjugation. We exploit that adjacent columns $n$ and $n+1$ share two common points to resolve these ambiguities.
	
	\subsection{Global Assembly via Adjacent Stitching}
	
	After the local retrieval on each time slice $m$, we obtain a complex estimate $\{\tilde{h}_{m,n}:n=N_1',\dots,N_2'\}$ which is ambiguous up to a global phase and conjugation. To construct the two possible candidates, we use the finite Fourier series
	\[
	\tilde g_m^{(1)}(u)=b\sum_n \tilde h_{m,n} e^{2\pi i x_{m,n}u},\qquad
	\tilde g_m^{(-1)}(u)=b\sum_n \overline{\tilde {h}_{m,n}} e^{2\pi i x_{m,n}u}=\overline{\tilde g_m^{(1)}(-u)}.
	\]
	In practice, the series is evaluated in blocks of $256$ local time points to avoid generating excessively large exponential matrices.
	
	\subsubsection*{Pairwise alignment between adjacent slices.}
	To align a current slice to an already-fixed reference slice, we first recover the underlying function $f$ in their overlap region by removing the window. Let $c_r$ and $c_m$ be the centres of the reference and current slices, respectively. On the geometric overlap $\Omega_{r,m}$, we compute
	\[
	F_r(t)=\frac{\tilde g_r(t-c_r)}{\overline{\phi(t-c_r)}},\qquad
	F_{m,s}(t)=\frac{\tilde g_m^{(\sigma)}(t-c_m)}{\overline{\phi(t-c_m)}},
	\]
	retaining only points where both window values exceed a small threshold relative to $\max|\phi|$.
	
	For each candidate $\sigma\in\{-1,1\}$ of the current slice, the cross-correlation sum is
	\[
	z_{m,\sigma}=\sum_{t\in\Omega_{r,m}} F_r(t)\overline{F_{m,\sigma}(t)}.
	\]
	The optimal unit phase aligning the candidate to the reference is
	\[
	\lambda_{m,\sigma}=
	\begin{cases}
		z_{m,\sigma}/|z_{m,\sigma}|,& z_{m,\sigma}\neq 0,\\
		1,& z_{m,\sigma}=0,
	\end{cases}
	\]
	and the normalised residual is defined as
	\[
	\rho_{m,\sigma}=
	\frac{\sum_{\Omega_{r,m}} |F_r-\lambda_{m,\sigma}F_{m,\sigma}|^2}
	{\sum_{\Omega_{r,m}} |F_r|^2+\sum_{\Omega_{r,m}} |F_{m,\sigma}|^2}.
	\]
	The candidate with the smaller residual is selected:
	\[
	\sigma_m=\arg\min_{\sigma\in\{-1,1\}}\rho_{m,\sigma},\qquad \tilde g_m = \lambda_{m,\sigma_m}\tilde g_m^{(\sigma_m)}.
	\]
	
	\subsubsection*{Anchor selection and sequential propagation.}
	The anchor slice is chosen as the one with smallest $|m|$. Its orientation is determined by comparing both candidates against its immediate neighbours using the pairwise rule above. For each anchor candidate $\sigma$, we compute the minimal residual achievable by the left and right neighbours (each over its two candidates) and sum them:
	\[
	C_{\text{anchor}}(\sigma)=\rho_{\text{left}}^{\min}(\sigma)+\rho_{\text{right}}^{\min}(\sigma).
	\]
	The anchor direction is then selected as
	\[
	\sigma_{\text{anchor}}=\arg\min_{\sigma\in\{-1,1\}} C_{\text{anchor}}(\sigma).
	\]
	If only one slice exists, the first candidate is taken by default.
	
	Once the anchor is fixed, propagation proceeds sequentially to the right and then to the left, applying the pairwise alignment rule at each step using only the already-fixed direct neighbour.
	
	\subsubsection*{Frame weights and final reconstruction.}
	The selected residual $\rho_m$ defines the frame weight
	\[
	w_m=\frac{1}{1+\rho_m},\qquad w_{\text{anchor}}=1.
	\]
	The final reconstruction is obtained by pointwise weighted overlap averaging:
	\begin{align}\label{equ5-1}
		\tilde f(t)= \frac{\sum_m w_m\,\phi(t-ma)\,\tilde g_m(t-ma)}
		{\sum_m w_m |\phi(t-ma)|^2}.
	\end{align}
	\subsection{Synthetic examples.}
	To demonstrate the performance of the three methods, we conduct specific experiments with the rectangular window $\phi=\chi_{[-B,B]}$.
	
	The synthetic signals are compactly supported on a finite interval $[A,C]$ and consists of two segments joined at a discontinuity:
	\begin{align}\label{eq:f}
		f_{\mathrm{raw}}(t)=
		\begin{cases}
			r(t), & A<t<\tau,\\[4pt]
			q(t), & \tau\le t<C,
		\end{cases}
	\end{align}
	where the left segment is an exponentially modulated complex exponential
	\[
	r(t)=\gamma e^{s\lambda(t-m_r)}e^{2\pi i\,\xi_0 t},
	\]
	and the right segment is a nonharmonic Fourier sum
	\[
	q(t)=\sum_{k=1}^{K} a_ke^{2\pi i\,\xi_k(t-\tau)}.
	\]
	The parameters $\tau,s,\lambda,m_r,\xi_0,\gamma,K,\xi_k,a_k$ are randomly generated subject to the constraints described in Section~3. The signal is normalised to have unit $\mathrm{RMS}$ over its support:
	\[
	f(t)=\frac{f_{\mathrm{raw}}(t)}{\|f_{\mathrm{raw}}\|_{\mathrm{RMS}}},\qquad
	\|f_{\mathrm{raw}}\|_{\mathrm{RMS}}=\sqrt{\frac{1}{C-A}\int_A^C |f_{\mathrm{raw}}(t)|^2\,dt}.
	\]
	This normalisation ensures that the average power of the test signal is always unity, providing a consistent basis for comparing errors across different random realisations.
	
	Additive absolute-uniform noise is applied to the measured magnitudes:
	\[
	\tilde r_n = r_n + e_n^0,\qquad
	\tilde d_n = d_n + e_n^1,\qquad
	\tilde \ell_n = \ell_n + e_n^2,
	\]
	where $e_n^0, e_n^1, e_n^2$ denote independent noise realisations with  $e_n^k\sim \mathcal U[-\epsilon,\epsilon]$, and all noisy magnitudes are clipped to nonnegative values. This noise model does not depend on the signal amplitude; the RMS normalisation nonetheless makes the relative error meaningful by keeping the signal magnitude scale consistent across trials.
	
	The frame parameters are $B=1$, $a=0.25$, $[N_1,N_2]=[-20/\mathfrak{b},20/\mathfrak{b}]$  with signal support $[A,C]=[-2,3]$. The slice indices are chosen as $m\in[-11,15]$ so that the collection of windows $\{[ma-B, ma+B]\}_{m\in[-11,15]}$ covers the full support $[-2,3]$ with redundancy.
	
	The original function used in the numerical simulation is the function  $f_{\mathrm{raw}}$ defined as \eqref{eq:f}, i.e., with the following parameters:
	\[
	\begin{aligned}
		&\|f_{\mathrm{raw}}\|_{\mathrm{RMS}}=1.418892266800545,\quad \tau=0.65446554909395371,\\
		&\gamma=1.018986660670909-0.13948982640056548\,i,\quad s=1,\\
		&\lambda=0.99843079625278031,\quad m_r=-0.67276722545302314,\\
		&\xi_0=-2.5086703942485098,\quad K=4,
	\end{aligned}
	\]
	and
	\[
	\begin{aligned}
		a_1&=-0.6312650835795649-0.024533957769310277\,i,\\
		a_2&=-0.15534360399466846+0.32076248832155635\,i,\\
		a_3&=0.04410918701954139-0.43546887883371682\,i,\\
		a_4&=-0.03102852081826258+0.62075182570221987\,i,
	\end{aligned}
	\]
	\[
	\begin{aligned}
		\xi_1&=-4.3491371598247008,\quad
		\xi_2=-1.8183545993362658,\\
		\xi_3&=-0.9473855099026447,\quad
		\xi_4=0.91690508947210159.
	\end{aligned}
	\]

	The reconstruction error $\mathcal{E}$ is defined as the relative $L^2$ error between the reconstructed signal $\tilde f$ and the synthetic signal $f$ up to a global phase:
	\[
	\mathcal{E} = \frac{\min_{|\lambda|=1} \| \lambda \tilde f - f \|_{L^2([-2,3])}}{\|f\|_{L^2([-2,3])}}.
	\]
	
	\subsubsection*{Two-window result}
	
	The test uses the rectangular window
	\[
	\phi(u)=\mathbf{1}_{[-B,B]}(u),\qquad B=1.
	\]
	The second window is
	\[
	\psi(u)=\phi(u)\bigl(e^{2\pi \mathrm{i} b u}-1\bigr),\qquad b=0.2.
	\]
	The candidate offsets $\beta\in\{0,b/8,2b/8,\dots,7b/8\}$ and the refined grid parameter is set to  $N_{\mathrm{ref}}=8 $.
	
	Figures~1--3 show representative reconstructions obtained by Algorithm~\ref{alo2} with $\mathfrak{b}=0.2$ and absolute-uniform noise at levels $\epsilon=0$, $0.001$, and $0.01$ respectively.
	\input{two-window-uniform_eps-0.tex}
	\input{two-window-uniform_eps-0.001.tex}
	\input{two-window-uniform_eps-0.01.tex}
	\subsubsection*{Three-window K-sign result}
	Based on the two-window setting, a third window is added:
	\[
	\psi_2(u)=\phi(u)\bigl(e^{4\pi\mathrm{i}bu}-1\bigr),\quad b=0.2.
	\]
	The candidate offsets $\beta\in\{0,b/8,2b/8,\dots,7b/8\}$.
	
	Figures~4--6 show representative reconstructions obtained by Algorithm~\ref{alo8} with $\mathfrak{b}=0.2$ and absolute-uniform noise at levels $\epsilon=0$, $0.001$, and $0.01$ respectively.
	\input{new-k-sign-uniform_eps-0.tex}
	\input{new-k-sign-uniform_eps-0.001.tex}
	\input{new-k-sign-uniform_eps-0.01.tex}
	\subsubsection*{Three-window SCR result}
	The same third window and candidate offsets as above are used. Figures~7--9 show representative reconstructions obtained by Algorithm~\ref{alo4} with $\mathfrak{b}=0.2$ and absolute-uniform noise at levels $\epsilon=0$, $0.001$, and $0.01$ respectively.
	\input{old-six-scr-uniform_eps-0.tex}
	\input{old-six-scr-uniform_eps-0.001.tex}
	\input{old-six-scr-uniform_eps-0.01.tex}
	
	\subsubsection*{Runtime comparison}
	All numerical experiments were performed under the same hardware and software environment. To compare the computational efficiency of the three-window methods, we measured the runtime of the K-sign method and the SCR method under identical conditions. For each noise level $\epsilon \in \{0,0.001,0.01\}$, we generated $4$ independent noise realizations. Under each realization, both algorithms were run $5$ times independently, and the runtime was recorded and averaged.
	\begin{table}[htbp]
		\centering
		\caption{Runtime comparison: K-sign vs. SCR.}
		\label{tab:runtime}
		\begin{tabular}{cccc}
			\toprule
			Noise level $\varepsilon$ & K-sign time (s) & SCR time (s) & Speedup (SCR / K-sign) \\
			\midrule
			$0$      & $0.71$ & $100.58$ & $141.66\times$ \\
			$0.001$  & $0.78$ & $227.15$ & $291.22\times$ \\
			$0.01$   & $0.70$ & $889.19$ & $1270.27\times$  \\
			\bottomrule
		\end{tabular}
	\end{table}
	
	The numerical results show that the three methods exhibit different performance characteristics. The two-window method is more sensitive to noise, since an incorrect continuation of the square-root branch near a zero of $Q$ may affect a substantial portion of the recovered sequence. The K-sign method attains comparable accuracy with a substantially shorter runtime. However, since the signs are propagated recursively, an incorrect relative-sign decision may reverse the subsequent sign sequence on the same side of the anchor and lead to a large accumulated phase error. The SCR method shows a slight overall advantage in reconstruction accuracy in the present experiments, but it requires considerably more runtime and relies on the Gerchberg-Saxton iteration. As explicitly observed by Lai, Littmann, and Weber \cite{2021Conjugate}, this iteration may fail to reach the desired solution for some initialisations and may exhibit the traps and tunnels phenomenon.
	
%\bibliography{Reference1008}

\begin{thebibliography}{10}

\bibitem{2021stft}
R.~Alaifari and M.~Wellershoff.
\newblock Uniqueness of {STFT} phase retrieval for bandlimited functions.
\newblock {\em Appl. Comput. Harmon. Anal.}, 50:34--48, 2021.

\bibitem{2022gabor}
R.~Alaifari and M.~Wellershoff.
\newblock Phase retrieval from sampled {G}abor transform magnitudes:
  counterexamples.
\newblock {\em J. Fourier Anal. Appl.}, 28(1):Paper No. 9, 8, 2022.

\bibitem{Alaifari2024MultiWindow}
R.~Alaifari and Y.~Yang.
\newblock Multi-window approaches for direct and stable stft phase retrieval,
  2024.
\newblock arXiv:2410.05486 [math.FA].

\bibitem{Entire2014}
N.~B. Andersen.
\newblock Entire {$L^p$}-functions of exponential type.
\newblock {\em Expo. Math.}, 32(3):199--220, 2014.

\bibitem{Bartusel2023}
D.~Bartusel.
\newblock Injectivity conditions for {STFT} phase retrieval on {$\mathbb Z$},
  {$\mathbb Z_d$} and {$\mathbb R^d$}.
\newblock {\em J. Fourier Anal. Appl.}, 29(4):Paper No. 53, 35, 2023.

\bibitem{2018stft}
T.~Bendory, Y.~C. Eldar, and N.~Boumal.
\newblock Non-convex phase retrieval from {STFT} measurements.
\newblock {\em IEEE Trans. Inform. Theory}, 64(1):467--484, 2018.

\bibitem{Chen2026STFTACHA}
T.~Chen, H.~Lu, W.~Sun, and Y.~Zhao.
\newblock {STFT} phase retrieval with two window functions.
\newblock {\em Applied and Computational Harmonic Analysis}, 85:101905, 2026.

\bibitem{Cheng2025Conjugate}
C.~Cheng, B.~Wu, and J.~Xian.
\newblock Conjugate phase retrieval on graphs and with applications in
  shift-invariant spaces, 2025.
\newblock arXiv:2507.22468 [math.FA].

\bibitem{1987ast}
J.~C. Dainty and J.~R. Fienup.
\newblock Phase retrieval and image reconstruction for astronomy.
\newblock In H.~Stark, editor, {\em Image Recovery: Theory and Application},
  pages 231--275. Academic Press, Orlando, FL, 1987.

\bibitem{1978object}
J.~R. Fienup.
\newblock Reconstruction of an object from the modulus of its fourier
  transform.
\newblock {\em Optics letters}, 3(1):27--29, 1978.

\bibitem{2014ast}
R.~A. Gonsalves.
\newblock Perspectives on phase retrieval and phase diversity in astronomy.
\newblock In E.~Marchetti, L.~Close, and J.~Veran, editors, {\em ADAPTIVE
  OPTICS SYSTEMS IV}, volume 9148 of {\em Proceedings of SPIE}, 2014.

\bibitem{2022stft}
P.~Grohs and L.~Liehr.
\newblock On foundational discretization barriers in {STFT} phase retrieval.
\newblock {\em J. Fourier Anal. Appl.}, 28(2):Paper No. 39, 21, 2022.

\bibitem{2025stft}
P.~Grohs and L.~Liehr.
\newblock Phaseless sampling on square-root lattices.
\newblock {\em Found. Comput. Math.}, 25(2):351--374, 2025.

\bibitem{2025stft1}
P.~Grohs, L.~Liehr, and M.~Rathmair.
\newblock Multi-window {STFT} phase retrieval: lattice uniqueness.
\newblock {\em J. Funct. Anal.}, 288(3):Paper No. 110733, 23, 2025.

\bibitem{2019gabor}
P.~Grohs and M.~Rathmair.
\newblock Stable {G}abor phase retrieval and spectral clustering.
\newblock {\em Comm. Pure Appl. Math.}, 72(5):981--1043, 2019.

\bibitem{2013Quantum}
T.~Heinosaari, L.~Mazzarella, and M.~M. Wolf.
\newblock Quantum tomography under prior information.
\newblock {\em Comm. Math. Phys.}, 318(2):355--374, 2013.

\bibitem{Jaganathan2016STFTPR}
K.~Jaganathan, Y.~C. Eldar, and B.~Hassibi.
\newblock Stft phase retrieval: Uniqueness guarantees and recovery algorithms.
\newblock {\em IEEE Journal of Selected Topics in Signal Processing},
  10(4):770--781, June 2016.

\bibitem{2015quantum}
M.~Kech and M.~Wolf.
\newblock From quantum tomography to phase retrieval and back.
\newblock In {\em 2015 International Conference on Sampling Theory and
  Applications (SampTA)}, pages 173--177, 2015.

\bibitem{1988optics}
T.~I. Kuznetsova.
\newblock On the phase retrieval problem in optics.
\newblock {\em Soviet Physics Uspekhi}, 31(4):364, apr 1988.

\bibitem{2021Conjugate}
C.-K. Lai, F.~Littmann, and E.~S. Weber.
\newblock Conjugate phase retrieval in {P}aley-{W}iener space.
\newblock {\em J. Fourier Anal. Appl.}, 27(6):Paper No. 89, 23, 2021.

\bibitem{Li2017PhaseRetrieval}
L.~Li, C.~Cheng, D.~Han, Q.~Sun, and G.~Shi.
\newblock Phase retrieval from multiple-window short-time fourier measurements.
\newblock {\em IEEE Signal Processing Letters}, 24(4):372--376, Apr. 2017.

\bibitem{2021stft1}
R.~Li, B.~Liu, and Q.~Zhang.
\newblock Uniqueness of {STFT} phase retrieval in shift-invariant spaces.
\newblock {\em Appl. Math. Lett.}, 118:Paper No. 107131, 6, 2021.

\bibitem{2024stlct}
R.~Li and Q.~Zhang.
\newblock Uniqueness of phase retrieval with short-time linear canonical
  transform.
\newblock {\em Anal. Appl. (Singap.)}, 22(7):1181--1193, 2024.

\bibitem{2004entire}
J.~N. McDonald.
\newblock Phase retrieval and magnitude retrieval of entire functions.
\newblock {\em J. Fourier Anal. Appl.}, 10(3):259--267, 2004.

\bibitem{2014cry}
J.~Navaza.
\newblock The phase problem in crystallography.
\newblock {\em Gac. R. Soc. Mat. Esp.}, 17(4):705--715, 2014.

\bibitem{pesenson2025notes}
I.~Z. Pesenson.
\newblock Notes on bernstein spaces, sampling, boas interpolation formulas and
  their extensions to banach spaces, 2025.
\newblock 29 pages.

\bibitem{2018cry}
S.~Pinilla, H.~Garc\'{\i}a, L.~D\'{\i}az, J.~Poveda, and H.~Arguello.
\newblock Coded aperture design for solving the phase retrieval problem in
  {X}-ray crystallography.
\newblock {\em J. Comput. Appl. Math.}, 338:111--128, 2018.

\bibitem{1983cry}
O.~E. Piro.
\newblock Information theory and the ``phase problem'' in crystallography.
\newblock {\em Acta Cryst. Sect. A}, 39(1):61--68, 1983.

\bibitem{2015Optical}
Y.~Shechtman, Y.~C. Eldar, O.~Cohen, H.~N. Chapman, J.~Miao, and M.~Segev.
\newblock Phase retrieval with application to optical imaging: A contemporary
  overview.
\newblock {\em IEEE Signal Processing Magazine}, 32(3):87--109, 2015.

\bibitem{1963optics}
A.~Walther.
\newblock The question of phase retrieval in optics.
\newblock {\em Optica Acta}, 10:41--49, 1963.

\end{thebibliography}

\end{document}